%
%
%
%
%

\documentclass[11pt,reqno]{amsart} 

\usepackage{amssymb,latexsym}
\usepackage[draft=true]{hyperref}
\usepackage{cite} 

\usepackage[height=190mm,width=130mm]{geometry} 

\theoremstyle{plain}
\newtheorem{theorem}{Theorem}
\newtheorem{lemma}{Lemma}

\theoremstyle{definition}
\newtheorem{definition}{Definition}

\theoremstyle{remark}


\numberwithin{equation}{section} 

\begin{document}
\title{Inverse problem for singular diffusion operator } 

\author{ Abdullah ERG\"{U}N}
\address{Cumhuriyet University\\ Vocational School of Sivas\\ Sivas.\\ 58140
  \\ Turkey}

\email{aergun@cumhuriyet.edu.tr}

\begin{abstract}
In this study, singular diffusion operator with jump conditions is
considered. Integral representations have been derived for solutions that
satisfy boundary conditions and jump conditions. Some properties of
eigenvalues and eigenfunctions are investigated. Asymtotic representation of eigenvalues and eigenfunction have been obtained. Reconstruction of the singular diffusion operator  have been shown by the Weyl function.
\end{abstract}


\subjclass[2010]{34K08, 34L05, 34L10, 34E05}

\keywords{Inverse problem, Sturm-Liouville, Diffusion operator, Integral representation.}

\maketitle

\section{ Introduction.}

 Let's define the following boundary value problem which will be denoted by $L$ in the sequel all the paper
\begin{equation} \label{1)} 
l\left(y\right):=-y''+\left[2\lambda p\left(x\right)+q\left(x\right)\right]y=\lambda ^{2} \delta \left(x\right)y,\, \, x\in \left[0,\pi \right]/\left\{p_{1} ,p_{2} \right\} 
\end{equation} 
with the boundary conditions
\begin{equation} \label{2)} 
y'\left(0\right)=0,y\left(\pi \right)=0 
\end{equation} 
and the jump conditions
\begin{equation} \label{3)} 
y\left(p_{1} +0\right)=\alpha _{1} y\left(p_{1} -0\right) 
\end{equation} 
\begin{equation} \label{4)} 
y'\left(p_{1} +0\right)=\beta _{1} y'\left(p_{1} -0\right)+i\lambda \gamma _{1} y\left(p_{1} -0\right) 
\end{equation} 
\begin{equation} \label{5)} 
y\left(p_{2} +0\right)=\alpha _{2} y\left(p_{2} -0\right) 
\end{equation} 
\begin{equation} \label{6)} 
y'\left(p_{2} +0\right)=\beta _{2} y'\left(p_{2} -0\right)+i\lambda \gamma _{2} y\left(p_{2} -0\right) 
\end{equation} 
where $\lambda $ is a spectral parameter, $q(x)\in L_{2} \left[0,\pi \right]$, $p(x)\in W_{2}^{1} \left[0,\pi \right]$,\\ $p_{1} ,p_{2} \in \left(0,\pi \right)$, $p_{1} <p_{2} $, $\left|\alpha _{1} -1\right|^{2} +\gamma _{1} ^{2} \ne 0$, $\left|\alpha _{2} -1\right|^{2} +\gamma _{2} \ne 0$, $\left(\beta _{i} =\frac{1}{\alpha _{i} } \left(i=1,2\right)\right)$ and

\noindent $\delta \left(x\right)=\left\{%
\begin{array}{l}
{1\, \, \, \, ,\, \, \, \, x\in \left(0,p_{1} \right)} \\
{\alpha ^{2} ,\, \, \, \, x\in \left(p_{1} ,p_{2} \right)} \\
{\beta ^{2} ,\, \, \, \, x\in \left(p_{2} ,\pi \right)}%
\end{array}%
\right. $ to be $\alpha >0\, \, ,\, \alpha \ne 1$,$\beta >0\, \, ,\, \beta
\ne 1$ real numbers.

\noindent Direct and inverse problems are important in mathematics, physics
and engineering. The inverse problem is called the reconstruction of the
operator whose spectral characteristics are given in sequences. For example;
to learn the distribution of density in the nonhomogeneous arc according to
the wave lengths in mechanics and finding the field potentials according to
scattering data in the quantum physics are examples of inverse problems. The
first study on inverse problems for differential equations was made by
Ambartsumyan $\left[25\right]$. A significant study in the spectral theory
of the singular differential operators was carried out by Levitan in $\left[4%
\right]$. An important method in the solution of inverse problems is the
transformation operators. 
Guseinov\cite{Guseinov-2} studied the regular differential equation and the direct
spectral problem of the operator under certain initial conditions. In recent
years, Weyl function has frequently been used to solve inverse problems. The
Weyl function was introduced by H. Weyl(1910) in the literature. Many
studies have been made on direct or inverse problems \cite{2,4,Amirov,Anderson,Borg,Carlson,Ergun-1,Gasymov,Guseinov-1,Guseinov-2,Hald,Jdanovich,Krein,Levin,Levitan-1,Levitan-2,Marchenko,Nabiev,Yang-1,Yurko,Yurko-1,koyunbakan,levitan,Ergun-2,Ergun-3,Gala,Gala-1,Maris,ragusa} . The
solution of discontinuous boundary value problem can be given as an example
of concrete problem of mathematical physics. Boundary value problems with
discontinuous coefficients are important for applied mathematics and applied
sciences.

\noindent H. Koyunbakan, E. S. Panakhov\cite{koyunbakan} proved that the potential function
can be determined on $\left[\frac{\pi }{2} ,\pi \right]$while it is known on
$\left[0,\frac{\pi }{2} \right]$ by single spectrum in$\left[11\right]$. C.
F. Yang \cite{Yang-1}  showed that  can be
determined uniquely  diffusion operator from nodal data.

\section{Preliminaries.}

\noindent Let $\phi \left(x,\lambda \right)$, $\psi \left(x,\lambda \right)$
be solutions of $\left(1.1\right)$ respectively under the boundary conditions
$$
\phi \left(0,\lambda \right)=1,\phi ^{\prime }\left(0,\lambda \right)=0
$$
$$
\psi \left(\pi ,\lambda \right)=0,\psi ^{\prime }\left(\pi ,\lambda \right)=1
$$
and discontinuity conditions $\left(1.3\right)-\left(1.6\right)$, where \textbf{%
	$Q\left(t\right)=2\lambda p\left(t\right)+q\left(t\right).$}

\noindent It is obvious that the function $\phi \left(x,\lambda \right)$ is similar to \cite{Ergun-2}
satisfies the following integral equations

if $0\leq x<p_{1}$ ;
\begin{equation} \label{2)} 
\phi \left( x,\lambda \right) =e^{i\lambda x}+\frac{1}{\lambda }%
\int_{0}^{x}\sin \lambda \left( x-t\right) Q\left( t\right) y\left(
t,\lambda \right) dt  
\end{equation}%
if $p_{1}<x<p_{2}$;
\begin{equation}
\begin{array}{l}
{\phi \left( x,\lambda \right) =\beta _{1}^{+}e^{i\lambda \varsigma
		^{+}\left( x\right) }+\beta _{1}^{-}e^{i\lambda \varsigma ^{-}\left(
		x\right) }+\frac{\gamma _{1}}{2\alpha }e^{i\lambda \varsigma ^{+}\left(
		x\right) }-\frac{\gamma _{1}}{2\alpha }e^{i\lambda \varsigma ^{-}\left(
		x\right) }} \\
{+\beta _{1}^{+}\int_{0}^{p_{1}}\frac{\sin \lambda \left( \varsigma
		^{+}\left( x\right) -t\right) }{\lambda }J\left( t\right) y\left( t,\lambda
	\right) dt+\beta _{1}^{-}\int_{0}^{p_{1}}\frac{\sin \lambda \left( \varsigma
		^{-}\left( x\right) -t\right) }{\lambda }J\left( t\right) y\left( t,\lambda
	\right) dt} \\
{-i\frac{\gamma _{1}}{2\alpha }\int_{0}^{p_{1}}\frac{\cos \lambda \left(
		\varsigma ^{+}\left( x\right) -t\right) }{\lambda }J\left( t\right) y\left(
	t,\lambda \right) dt++i\frac{\gamma _{1}}{2\alpha }\int_{0}^{p_{1}}\frac{%
		\cos \lambda \left( \varsigma ^{-}\left( x\right) -t\right) }{\lambda }%
	J\left( t\right) y\left( t,\lambda \right) dt} \\
{+\int_{p_{1}}^{x}\frac{\sin \lambda \left( x-t\right) }{\lambda }J\left(
	t\right) y\left( t,\lambda \right) dt}%
\end{array}
\label{8}
\end{equation}

if $p_{2}<x\leq \pi $ ;

\begin{equation}
\begin{array}{l}
{\phi \left(x,\lambda \right)=\xi ^{+} e^{i\lambda b^{+} \left(x\right)} +\xi ^{-} e^{i\lambda b^{-} \left(x\right)} +\vartheta ^{+} e^{i\lambda s^{+} \left(x\right)} +\vartheta ^{-} e^{i\lambda s^{-} \left(x\right)} } \\ {+\left(\beta _{1} ^{+} \beta _{2} ^{+} +\frac{\gamma _{1} \gamma _{2} }{4\alpha \beta } \right)\int _{0}^{p_{1} }\frac{\sin \lambda \left(b^{+} \left(x\right)-t\right)}{\lambda }  J\left(t\right)y\left(t,\lambda \right)dt} \\ {+\left(\beta _{1} ^{+} \beta _{2} ^{-} -\frac{\gamma _{1} \gamma _{2} }{4\alpha \beta } \right)\int _{0}^{p_{1} }\frac{\sin \lambda \left(s^{+} \left(x\right)-t\right)}{\lambda }  J\left(t\right)y\left(t,\lambda \right)dt} \\ {+\left(\beta _{1} ^{-} \beta _{2} ^{-} -\frac{\gamma _{1} \gamma _{2} }{4\alpha \beta } \right)\int _{0}^{p_{1} }\frac{\sin \lambda \left(b^{-} \left(x\right)-t\right)}{\lambda }  J\left(t\right)y\left(t,\lambda \right)dt} \\ {+\left(\beta _{1} ^{-} \beta _{2} ^{+} +\frac{\gamma _{1} \gamma _{2} }{4\alpha \beta } \right)\int _{p_{1} }^{p_{2} }\frac{\sin \lambda \left(s^{-} \left(x\right)-t\right)}{\lambda }  J\left(t\right)y\left(t,\lambda \right)dt} \\ {-i\left(\frac{\gamma _{1} \beta _{2} ^{+} }{2\alpha } +\frac{\gamma _{2} \beta _{1} ^{+} }{2\beta } \right)\int _{0}^{p_{1} }\frac{\cos \lambda \left(b^{+} \left(x\right)-t\right)}{\lambda }  J\left(t\right)y\left(t,\lambda \right)dt} \\ {-i\left(\frac{\gamma _{1} \beta _{2} ^{-} }{2\alpha } -\frac{\gamma _{2} \beta _{1} ^{+} }{2\beta } \right)\int _{0}^{p_{1} }\frac{\cos \lambda \left(s^{+} \left(x\right)-t\right)}{\lambda }  J\left(t\right)y\left(t,\lambda \right)dt} \\ {+i\left(\frac{\gamma _{1} \beta _{2} ^{-} }{2\alpha } -\frac{\gamma _{2} \beta _{1} ^{-} }{2\beta } \right)\int _{0}^{p_{1} }\frac{\cos \lambda \left(b^{-} \left(x\right)-t\right)}{\lambda }  J\left(t\right)y\left(t,\lambda \right)dt} \\ {+i\left(\frac{\gamma _{1} \beta _{2} ^{+} }{2\alpha } +\frac{\gamma _{2} \beta _{1} ^{-} }{2\beta } \right)\int _{p_{1} }^{p_{2} }\frac{\cos \lambda \left(s^{-} \left(x\right)-t\right)}{\lambda }  J\left(t\right)y\left(t,\lambda \right)dt} \\ {+\beta _{2} ^{+} \int _{p_{1} }^{p_{2} }\frac{\sin \lambda \left(\beta x-\beta p_{2} +\alpha p_{2} -\alpha t\right)}{\lambda }  J\left(t\right)y\left(t,\lambda \right)dt} \\ {-\beta _{2} ^{-} \int _{p_{1} }^{p_{2} }\frac{\sin \lambda \left(\beta x-\beta p_{2} -\alpha p_{2} +\alpha t\right)}{\lambda }  J\left(t\right)y\left(t,\lambda \right)dt} \\ {-i\frac{\gamma _{2} }{2\beta } \int _{p_{1} }^{p_{2} }\frac{\cos \lambda \left(\beta x-\beta p_{2} +\alpha p_{2} -\alpha t\right)}{\lambda }  J\left(t\right)y\left(t,\lambda \right)dt} \\ {+i\frac{\gamma _{2} }{2\beta } \int _{p_{1} }^{p_{2} }\frac{\cos \lambda \left(\beta x-\beta p_{2} -\alpha p_{2} +\alpha t\right)}{\lambda }  J\left(t\right)y\left(t,\lambda \right)dt} \\ {+\int _{p_{2} }^{x}\frac{\sin \lambda \left(x-t\right)}{\lambda }  J\left(t\right)y\left(t,\lambda \right)dt\, \, \, \, \, \, \, \, \, \, \, \, \, \, \, \, \, \, \, \, \, \, \, \, \, \, \, \, \, \, \, \, \, \, \, \, \, \, \, \, \, \, \, \, \, \, \, \, \, \, \, \, \, \, \, \, \, \, \, \, \, \, \, \, \, \, \, \, \, \, \, \, \, \, \, \, \, \, \, \, \, \, \, \, \, \, \, \, \, \, \, \, \, \, \, \, \, \, \, \, \, \, \, \, \, \, \, \, \, \, \, \, \, \, \, \, } 
\end{array}
\label{9}
\end{equation}%
and it is obvious that the function $\psi \left( x,\lambda \right) $
satisfies the following integral equations;

\noindent

\noindent if $p_{2}<x\leq \pi $,
\begin{equation}
\psi \left( x,\lambda \right) =\frac{\sin \lambda \beta \left( x-\pi \right)
}{\lambda \beta }+\int_{x}^{\pi }\frac{\sin \lambda \beta \left( x-t\right)
}{\lambda \beta }Q\left( t\right) y\left( t,\lambda \right) dt  \label{10}
\end{equation}%
\ if $p_{1}<x<p_{2}$;
\begin{equation}
\begin{array}{l}
{\psi \left(x,\lambda \right)=} \\ {\left(\frac{\alpha \beta _{2} -\gamma _{2} }{2\alpha \beta _{2} \lambda \alpha _{2} \beta } -\frac{1}{2\alpha \beta _{2} \lambda } \right)e^{-i\lambda \left(\beta \left(p_{2} -\pi \right)+\alpha \left(p_{2} -x\right)\right)} } \\ {+\left(\frac{\alpha \beta _{2} +\gamma _{2} }{2\alpha \beta _{2} \lambda \alpha _{2} \beta } +\frac{1}{2\alpha \beta _{2} \lambda } \right)e^{-i\lambda \left(\beta \left(p_{2} -\pi \right)-\alpha \left(p_{2} -x\right)\right)} } \\ {-\left(\frac{\alpha \beta _{2} -\gamma _{2} }{2\alpha \beta _{2} } -\frac{1}{2} \right)\int _{p_{1} }^{p_{2} }\frac{\sin \lambda \left(x-p_{2} +\alpha t-\alpha p_{2} \right)}{\lambda \alpha }  Q\left(t\right)y\left(t,\lambda \right)dt} \\ {+\left(\frac{\alpha \beta _{2} -\gamma _{2} }{2\alpha \beta _{2} } +\frac{1}{2} \right)\int _{p_{1} }^{p_{2} }\frac{\sin \lambda \left(x-p_{2} -\alpha t+\alpha p_{2} \right)}{\lambda \alpha }  Q\left(t\right)y\left(t,\lambda \right)dt} \\ {+\frac{1}{2} \left(\frac{\alpha \beta _{2} -\gamma _{2} }{\alpha \beta _{2} \alpha _{2} \beta } -\frac{1}{\alpha \beta _{2} } \right)\int _{p_{2} }^{\pi }\frac{\sin \lambda \left(x-p_{2} +\beta \left(t-p_{2} \right)\right)}{\lambda \beta }  Q\left(t\right)y\left(t,\lambda \right)dt} \\ {-\frac{1}{2} \left(\frac{\alpha \beta _{2} -\gamma _{2} }{\alpha \beta _{2} \alpha _{2} \beta } -\frac{1}{\alpha \beta _{2} } \right)\int _{p_{2} }^{\pi }\frac{\sin \lambda \left(x-p_{2} -\beta \left(t-p_{2} \right)\right)}{\lambda \beta }  Q\left(t\right)y\left(t,\lambda \right)dt} \\ {+\frac{\gamma _{2} }{2\alpha \beta _{2} \lambda } \int _{p_{1} }^{p_{2} }\frac{\cos \lambda \left(x-p_{2} +\alpha t-\alpha p_{2} \right)}{\lambda \alpha }  Q\left(t\right)y\left(t,\lambda \right)dt} \\ {-\frac{\gamma _{2} }{2\alpha \beta _{2} \lambda } \int _{p_{1} }^{p_{2} }\frac{\cos \lambda \left(x-p_{2} -\alpha t+\alpha p_{2} \right)}{\lambda \alpha }  Q\left(t\right)y\left(t,\lambda \right)dt+\int _{p_{1} }^{x}\frac{\sin \lambda \alpha \left(x-t\right)}{\lambda \alpha }  Q\left(t\right)y\left(t,\lambda \right)dt}
\end{array}
\label{11}
\end{equation}%
\ if $0\leq x<p_{1}$;
$$
\begin{array}{l}
{\psi \left(x,\lambda \right)=} \\ {\left(\xi ^{+} +\frac{\alpha }{2\beta _{1} } \right)\eta ^{-} e^{-i\lambda \left(b^{-} \left(\pi \right)+x\right)} +\left(\xi ^{-} -\frac{\alpha }{2\beta _{1} } \right)\eta ^{+} e^{-i\lambda \left(b^{+} \left(\pi \right)+x\right)} } \\ {+\left(\xi ^{-} +\frac{\alpha }{2\beta _{1} } \right)\eta ^{-} e^{-i\lambda \left(s^{+} \left(\pi \right)+x\right)} +\left(\xi ^{-} -\frac{\alpha }{2\beta _{1} } \right)\eta ^{+} e^{-i\lambda \left(s^{-} \left(\pi \right)+x\right)} } \\ {+\left(\frac{1}{2\alpha _{1} } -\frac{\mu ^{+} }{4\beta _{1} } \right)\int _{a_{2} }^{\pi }\frac{\sin \lambda \left(x-p_{2} -\beta t+\beta p_{2} \right)}{\lambda }  Q\left(t\right)y\left(t,\lambda \right)dt} \\ {-\left(\frac{1}{2\alpha _{1} } +\frac{\mu ^{+} }{4\beta _{1} } \right)\int _{p_{2} }^{\pi }\frac{\sin \lambda \left(x-2p_{1} +p_{2} +\beta t-\beta p_{2} \right)}{\lambda }  Q\left(t\right)y\left(t,\lambda \right)dt} \\ {+\left(\frac{1}{2\alpha _{1} } +\frac{\mu ^{-} }{4\beta _{1} } \right)\int _{p_{2} }^{\pi }\frac{\sin \lambda \left(x-p_{2} -\beta t+\beta p_{2} \right)}{\lambda }  Q\left(t\right)y\left(t,\lambda \right)dt} \\ {-\left(\frac{1}{2\alpha _{1} } -\frac{\mu ^{-} }{4\beta _{1} } \right)\int _{p_{2} }^{\pi }\frac{\sin \lambda \left(x-2p_{1} +p_{2} -\beta t+\beta p_{2} \right)}{\lambda }  Q\left(t\right)y\left(t,\lambda \right)dt} \\ {+\frac{i\gamma _{1} }{2\alpha _{1} \beta _{1} } \int _{p_{2} }^{\pi }\frac{\cos \lambda \left(x-p_{2} +\beta t-\beta p_{2} \right)}{\lambda }  Q\left(t\right)y\left(t,\lambda \right)dt} \\ {-\frac{i\gamma _{1} }{2\alpha _{1} \beta _{1} } \int _{p_{2} }^{\pi }\frac{\cos \lambda \left(x-2p_{1} +p_{2} -\beta t+\beta p_{2} \right)}{\lambda }  Q\left(t\right)y\left(t,\lambda \right)dt} \\ {-\frac{i\gamma _{1} }{2\alpha _{1} \beta _{1} } \int _{p_{2} }^{\pi }\frac{\cos \lambda \left(x-p_{2} -\beta t+\beta p_{2} \right)}{\lambda }  Q\left(t\right)y\left(t,\lambda \right)dt} \\ {+\frac{i\gamma _{1} }{2\alpha _{1} \beta _{1} } \int _{p_{2} }^{\pi }\frac{\cos \lambda \left(x-2p_{1} +p_{2} +\beta t-\beta p_{2} \right)}{\lambda }  Q\left(t\right)y\left(t,\lambda \right)dt} \\ {+A\int _{p_{1} }^{p_{2} }\frac{\sin \lambda \left(x-p_{2} +\alpha t-\alpha p_{2} \right)}{\lambda \alpha }  Q\left(t\right)y\left(t,\lambda \right)dt+A\int _{p_{1} }^{p_{2} }\frac{\sin \lambda \left(x-2p_{1} +p_{2} -\alpha t+\alpha p_{2} \right)}{\lambda \alpha }  Q\left(t\right)y\left(t,\lambda \right)dt} \\ {+B\int _{p_{1} }^{p_{2} }\frac{\cos \lambda \left(x-p_{2} +\alpha t-\alpha p_{2} \right)}{\lambda \alpha }  Q\left(t\right)y\left(t,\lambda \right)dt+B\int _{p_{1} }^{p_{2} }\frac{\cos \lambda \left(x-2p_{1} +p_{2} -\alpha t+\alpha p_{2} \right)}{\lambda \alpha }  Q\left(t\right)y\left(t,\lambda \right)dt}
\end{array}%
$$
\begin{equation}
\begin{array}{l}
{+C\int_{p_{1}}^{p_{2}}\frac{\sin \lambda \left( x-p_{2}-\alpha t+\alpha
		p_{2}\right) }{\lambda \alpha }Q\left( t\right) y\left( t,\lambda \right)
	dt+C\int_{p_{1}}^{p_{2}}\frac{\sin \lambda \left( x-2p_{1}+p_{2}+\alpha
		t-\alpha p_{2}\right) }{\lambda \alpha }Q\left( t\right) y\left( t,\lambda
	\right) dt} \\
{+D\int_{p_{1}}^{p_{2}}\frac{\cos \lambda \left( x-p_{2}-\alpha t+\alpha
		p_{2}\right) }{\lambda \alpha }Q\left( t\right) y\left( t,\lambda \right)
	dt+D\int_{p_{1}}^{p_{2}}\frac{\cos \lambda \left( x-2p_{1}+p_{2}+\alpha
		t-\alpha p_{2}\right) }{\lambda \alpha }Q\left( t\right) y\left( t,\lambda
	\right) dt} \\
{+\int_{0}^{x}\frac{\sin \lambda \left( x-t\right) }{\lambda }Q\left(
	t\right) y\left( t,\lambda \right) dt}%
\end{array}
\label{12}
\end{equation}%
where $\varsigma ^{\pm } \left(x\right)=\pm \alpha x\mp \alpha p_{1} +p_{1} \, \, ,\, \, \beta _{1} ^{\pm } =\frac{1}{2} \left(\alpha _{1} \pm \frac{\beta _{1} }{\alpha } \right)\, \, ,$
\[b^{\pm } \left(x\right)=\beta x-\beta p_{2} +\mu ^{\pm } \left(p_{2} \right)\, \, ,\, \, s^{\pm } \left(x\right)=-\beta x+\beta p_{2} +\mu ^{\pm } \left(p_{2} \right)\, \, ,\] 
\[\beta _{2} ^{\mp } =\frac{1}{2} \left(\alpha _{2} \mp \frac{\alpha \beta _{2} }{\beta } \right)\, \, ,\, \, \xi ^{\mp } =\frac{1}{2} \left(\beta _{1} ^{\mp } \mp \frac{\gamma _{1} }{2\alpha } \right)\left(\alpha _{2} \mp \frac{\alpha \beta _{2} }{\beta } +\frac{\gamma _{2} }{\beta } \right)\, \, ,\] 
\[\vartheta ^{\mp } =\frac{1}{2} \left(\beta _{1} ^{\mp } \mp \frac{\gamma _{1} }{2\alpha } \right)\left(\alpha _{2} \pm \frac{\alpha \beta _{2} }{\beta } -\frac{\gamma _{2} }{\beta } \right)\, \, ,\, \, \mu ^{\pm } =\left(\frac{\alpha \beta _{2} \pm \gamma _{2} }{2\alpha \beta _{2} \lambda \alpha _{2} \beta } \pm \frac{1}{2\alpha \beta _{2} \lambda } \right)\, \, ,\] 
\[\begin{array}{l} {A=\left[\left(\frac{i\gamma _{1} \gamma _{2} }{4\lambda \alpha \alpha _{1} \beta _{1} \beta _{2} } +\left(\frac{-1}{2\alpha _{1} } -\frac{1}{4\beta _{1} } \right)\left(\frac{\alpha \beta _{2} -\gamma _{2} }{2\alpha \beta _{2} } -\frac{1}{2} \right)\right)\right]\, ,} \\ {B=\left[\frac{-i\gamma _{1} }{2\alpha _{1} \beta _{1} } \left(\frac{\alpha \beta _{2} -\gamma _{2} }{2\alpha \beta _{2} } -\frac{1}{2} \right)+\frac{1}{2\alpha _{1} } \frac{\gamma _{2} }{2\alpha \beta _{2} \lambda } \right],} \\ {C=\left[\left(\frac{1}{2\alpha _{1} } +\frac{1}{2\beta _{1} } \right)\left(\frac{\alpha \beta _{2} -\gamma _{2} }{2\alpha \beta _{2} } +\frac{1}{2} \right)+\frac{i\gamma _{1} \gamma _{2} }{4\lambda \alpha \alpha _{1} \beta _{1} \beta _{2} } \right]\, ,} \\ {\, D=\left[\frac{i\gamma _{1} }{2\alpha _{1} \beta _{1} } \left(\frac{\alpha \beta _{2} -\gamma _{2} }{2\alpha \beta _{2} } +\frac{1}{2} \right)-\frac{\gamma _{2} \left(1-\alpha ^{2} \right)}{4\alpha _{1} \alpha \beta _{2} \lambda } \right].} \end{array}\]

\begin{theorem}
	If  $p\left(x\right)\in W_{2}^{1} \left(0,\pi \right)$ and $q\left(x\right)\in L_{2} \left(0,\pi \right)$;$\; y_{\upsilon } \left(x,\lambda \right)$ be solutions of $\left(1.1\right)$, that satisfies conditions $\left(1.2\right)-\left(1.6\right)$, has the form
	\[y_{\upsilon } \left(x,\lambda \right)=y_{0\upsilon } \left(x,\lambda \right)+\int _{-x}^{x}K_{\upsilon }  \left(x,t\right)e^{i\lambda t} dt \left(\upsilon =\overline{1,3}\right)\] 
	where
	\[y_{0\upsilon } \left(x,\lambda \right)=\left\{\begin{array}{l} {R_{0} \left(x\right)e^{i\lambda x} \, \, \, \, \, \, \, \, \, \, \, \, \, \, \, \, \, \, \, \, \, \, \, \, \, \, \, \, \, \, \, \, \, \, \, \, \, \, \, \, \, \, \, \, \, \, \, \, \, \, \, \, \, \, \, \, \, \, \, \, \, \, \, \, \, \, \, \, \, \, \, \, \, \, \, \, \, \, \, \, \, \, \, \, \, \, \, \, \, \, \, \, \, \, \, \, \, \, ;0\le x<p_{1} } \\ {R_{1} \left(x\right)e^{i\lambda \varsigma ^{+} \left(x\right)} +R_{2} \left(x\right)e^{i\lambda \varsigma ^{-} \left(x\right)} \, \, \, \, \, \, \, \, \, \, \, \, \, \, \, \, \, \, \, \, \, \, \, \, \, \, \, \, \, \, \, \, \, \, \, \, \, \, \, \, \, \, \, \, \, \, \, \, \, \, \, \, \, \, \, \, \, \, \, \, \, \, \, \, ;p_{1} <x<p_{2} } \\ {R_{3} \left(x\right)e^{i\lambda b^{+} \left(x\right)} +R_{4} \left(x\right)e^{i\lambda b^{-} \left(x\right)} +R_{5} \left(x\right)e^{i\lambda s^{+} \left(x\right)} +R_{6} \left(x\right)e^{i\lambda s^{-} \left(x\right)} \, \, \, \, ;p_{2} <x\le \pi } \end{array}\right. \] 
	\[R_{0} \left(x\right)=e^{-i\int _{0}^{x}p\left(x\right)dx } \, \, ,\, \, R_{1} \left(x\right)=\left(\beta _{1} ^{+} +\frac{\gamma _{1} }{2\alpha } \right)R_{0} \left(p_{1} \right)e^{-\frac{i}{\alpha } \int _{p_{1} }^{x}p\left(t\right) dt} \, \, ,\] 
	\[R_{2} \left(x\right)=\left(\beta _{1} ^{-} -\frac{\gamma _{1} }{2\alpha } \right)R_{0} \left(p_{1} \right)e^{\frac{i}{\alpha } \int _{p_{1} }^{x}p\left(t\right) dt} \, \, ,\, \, R_{3} \left(x\right)=\left(\beta _{2} ^{+} +\frac{\gamma _{2} }{2\beta } \right)R_{1} \left(p_{2} \right)e^{-\frac{i}{\beta } \int _{p_{2} }^{x}p\left(t\right) dt} \, \, ,\] 
	\[R_{4} \left(x\right)=\left(\beta _{2} ^{-} +\frac{\gamma _{2} }{2\beta } \right)R_{2} \left(p_{2} \right)e^{-\frac{i}{\beta } \int _{p_{2} }^{x}p\left(t\right) dt} \, \, ,\, \, R_{5} \left(x\right)=\left(\beta _{2} ^{-} -\frac{\gamma _{2} }{2\beta } \right)R_{1} \left(p_{2} \right)e^{\frac{i}{\beta } \int _{p_{2} }^{x}p\left(t\right) dt} \, \, ,\] 
	\[R_{6} \left(x\right)=\left(\beta _{2} ^{+} -\frac{\gamma _{2} }{2\beta } \right)R_{2} \left(p_{2} \right)e^{\frac{i}{\beta } \int _{p_{2} }^{x}p\left(t\right) dt} \] 
	and $\varpi \left(x\right)=\int _{0}^{x}\left(2\left|p\left(t\right)\right|+\left(x-t\right)\left|q\left(t\right)\right|\right) dt$ and the functions $K_{\upsilon } \left(x,t\right)$ satisfies the inequality
	\[\int _{-x}^{x}\left|K_{\upsilon } \left(x,\lambda \right)\right| dt\le e^{c_{\upsilon } \varpi \left(x\right)} -1\] 
	with 
	\[c_{1} =1 ,c_{2} =\left(\beta _{1}^{+} +\left|\beta _{1}^{-} \right|+\frac{\gamma _{1} }{\alpha } +\frac{2}{\alpha } \right) , c_{3} =\left(\alpha _{2} \left(\beta _{1}^{+} +\left|\beta _{1}^{-} \right|\right)+\frac{1}{\alpha } \left(\beta _{2}^{+} +\left|\beta _{2}^{-} \right|\right)+\frac{\beta ^{+} }{\beta } +\frac{\gamma _{2} }{\beta } \right)\] 
	where  $\varsigma ^{\pm } \left(x\right)=\pm \alpha x\mp \alpha p_{1} +p_{1} $, $\beta _{1} ^{\pm } =\frac{1}{2} \left(\alpha _{1} \pm \frac{\beta _{1} }{\alpha } \right)$, $b^{\pm } \left(x\right)=\beta x-\beta p_{2} +\varsigma ^{\pm } \left(p_{2} \right)$, $s^{\pm } \left(x\right)=-\beta x+\beta p_{2} +\varsigma ^{\pm } \left(p_{2} \right)$, $\beta _{2} ^{\mp } =\frac{1}{2} \left(\alpha _{2} \mp \frac{\alpha \beta _{2} }{\beta } \right)$, $\xi ^{\mp } =\frac{1}{2} \left(\beta _{1} ^{\mp } \mp \frac{\gamma _{1} }{2\alpha } \right)\left(\alpha _{2} \mp \frac{\alpha \beta _{2} }{\beta } +\frac{\gamma _{2} }{\beta } \right)$, $\vartheta ^{\mp } =\frac{1}{2} \left(\beta _{1} ^{\mp } \mp \frac{\gamma _{1} }{2\alpha } \right)\left(\alpha _{2} \pm \frac{\alpha \beta _{2} }{\beta } -\frac{\gamma _{2} }{\beta } \right)$, $\beta ^{\pm } =\frac{1}{2} \left(1\pm \frac{1}{\beta } \right)$.
\end{theorem}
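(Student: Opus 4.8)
The plan is to prove the representation on the three subintervals $[0,p_{1})$, $(p_{1},p_{2})$ and $(p_{2},\pi]$ in turn, transporting the result across each discontinuity point by means of the conditions $(1.3)$--$(1.6)$. On every subinterval the scheme is the classical transformation-operator construction applied to the integral equations for $\phi$ recorded above: insert the ansatz
\[
y_{\upsilon}(x,\lambda)=y_{0\upsilon}(x,\lambda)+\int_{-x}^{x}K_{\upsilon}(x,t)\,e^{i\lambda t}\,dt
\]
into the integral equation valid on that subinterval, replace every kernel $\frac{\sin\lambda(\,\cdot\,)}{\lambda}$ and $\frac{\cos\lambda(\,\cdot\,)}{\lambda}$ by $\frac{1}{2i\lambda}(e^{i\lambda(\cdot)}-e^{-i\lambda(\cdot)})$ and $\frac{1}{2\lambda}(e^{i\lambda(\cdot)}+e^{-i\lambda(\cdot)})$, interchange the order of integration (legitimate because $Q=2\lambda p+q\in L_{1}$ and $K_{\upsilon}$ is bounded on compacts), and in each resulting double integral perform the affine substitution that turns the exponential into $e^{i\lambda s}$. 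Matching the coefficients of the free exponentials $e^{i\lambda x}$, $e^{i\lambda\varsigma^{\pm}(x)}$, $e^{i\lambda b^{\pm}(x)}$, $e^{i\lambda s^{\pm}(x)}$ then isolates first-order linear ODEs for the amplitudes $R_{0},\dots,R_{6}$, while matching the integrands against $e^{i\lambda s}$ yields a Volterra (Goursat-type) equation for $K_{\upsilon}$.

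I would do $[0,p_{1})$ first, starting from $\phi(x,\lambda)=e^{i\lambda x}+\frac1\lambda\int_{0}^{x}\sin\lambda(x-t)Q(t)\phi(t,\lambda)\,dt$. Split $\frac1\lambda\sin\lambda(x-t)Q(t)=2\sin\lambda(x-t)p(t)+\frac1\lambda\sin\lambda(x-t)q(t)$; only the first, $\lambda$-independent summand can feed the coefficient of $e^{i\lambda x}$, and comparing that coefficient gives $R_{0}'=-ip\,R_{0}$, $R_{0}(0)=1$, i.e.\ $R_{0}(x)=e^{-i\int_{0}^{x}p(t)\,dt}$. What remains is a Goursat-type problem: $K_{1}$ is prescribed on the characteristics $t=\pm x$ by the already-known boundary terms (in particular $K_{1}(x,-x)=0$) and satisfies, inside the triangle $|t|\le x$, a Volterra equation whose right-hand side is linear in $K_{1}$, with the $p$-part carrying no smoothing and the $q$-part carrying one integration in the $t$-direction. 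Solving by successive approximations $K_{1}=\sum_{n\ge0}K_{1}^{(n)}$ and proving, by induction on $n$, that $\int_{-x}^{x}|K_{1}^{(n)}(x,t)|\,dt\le\varpi(x)^{n+1}/(n+1)!$ with $\varpi(x)=\int_{0}^{x}(2|p(t)|+(x-t)|q(t)|)\,dt$ — the weight $2|p|$ reflecting that the $p$-contribution to $Q/\lambda$ is $O(1)$, and $(x-t)|q|$ the one integration picked up by the $1/\lambda$ in front of $q$ — gives $\int_{-x}^{x}|K_{1}|\,dt\le e^{\varpi(x)}-1$, hence $c_{1}=1$.

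Next I would transport across $p_{1}$. Because $(1.3)$--$(1.6)$ are linear in $y,y'$ with $\lambda$-independent coefficients apart from the single terms $i\lambda\gamma_{j}y(p_{j}-0)$, substituting $y_{1}(x,\lambda)=R_{0}(x)e^{i\lambda x}+\int_{-x}^{x}K_{1}e^{i\lambda t}\,dt$ into them and solving for $y$ on $(p_{1},p_{2})$ again yields a finite linear combination of the exponentials $e^{i\lambda\varsigma^{\pm}(x)}$ — whose amplitudes turn out to be exactly the stated $R_{1},R_{2}$, carrying the combinations $\beta_{1}^{\pm}=\tfrac12(\alpha_{1}\pm\beta_{1}/\alpha)$ and $\gamma_{1}/2\alpha$ produced by the jump — plus one further integral term of the required shape. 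A single additional Volterra iteration, now with inhomogeneous data coming from $K_{1}$ and from the jump, produces $K_{2}$; tracking the moduli of the coefficients that premultiply the transported kernel gives $\int_{-x}^{x}|K_{2}|\,dt\le e^{c_{2}\varpi(x)}-1$ with $c_{2}=\beta_{1}^{+}+|\beta_{1}^{-}|+\gamma_{1}/\alpha+2/\alpha$. Repeating verbatim across $p_{2}$, now with the four phases $b^{\pm},s^{\pm}$ and the amplitudes $R_{3},\dots,R_{6}$, yields the third interval and the constant $c_{3}$.

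I expect the main obstacle to be the algebraic bookkeeping in these last two steps rather than any analytic difficulty. The integral equations on $(p_{1},p_{2})$ and $(p_{2},\pi]$ contain a large number of terms with distinct phases $\varsigma^{\pm}(x)-t$, $b^{\pm}(x)-t$, $s^{\pm}(x)-t$ and $\beta x-\beta p_{2}\pm\alpha(p_{2}-t)$, and for each one must identify the correct affine change of variable $t\mapsto s$, verify that the image of the $t$-interval of integration lands inside $[-x,x]$ (so that each $K_{\upsilon}$ is supported where claimed), and control the Jacobian so that the accumulated contributions collapse precisely to the constants $c_{2},c_{3}$ and not to anything larger. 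Once the Volterra equation for each $K_{\upsilon}$ has been assembled, uniform convergence of the successive-approximation series on compact $\lambda$-sets and the exponential estimate are routine, as is the concluding verification that the constructed $y_{\upsilon}$ solves $(1.1)$ together with $(1.2)$--$(1.6)$, obtained by differentiating the representation twice and substituting back.
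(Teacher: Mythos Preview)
Your proposal is correct and follows the standard transformation-operator construction. The paper does not give its own proof of this theorem; it simply states ``The proof is done as in \cite{Ergun-2}'', and the method you outline---inserting the ansatz into the integral equations for $\phi$, matching the free exponentials to obtain the first-order ODEs for $R_{0},\dots,R_{6}$, deriving Volterra equations for the kernels $K_{\upsilon}$, and solving by successive approximations to get the bound $e^{c_{\upsilon}\varpi(x)}-1$---is precisely the approach one expects in that reference.
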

The proof is done as in \cite{Ergun-2}.

\begin{theorem}
	Let $p\left( x\right) \in W_{2}^{1}\left( 0,\pi \right) $ and $q\left(
	x\right) \in L_{2}\left( 0,\pi \right) $. The functions $A\left( x,t\right) $%
	, $B\left( x,t\right) $, whose first order partial derivatives, are summable
	on $\left[ 0,\pi \right] $, for each $x\in \left[ 0,\pi \right] $ such that
	representation
	$$
	\varphi \left( x,\lambda \right) =\varphi _{0}\left( x,\lambda \right)
	+\int_{0}^{x}A\left( x,t\right) \cos \lambda tdt+\int_{0}^{x}B\left(
	x,t\right) \sin \lambda tdt
	$$
	is satisfied.\\
\ If $p_{1}<x<p_{2}$;
\end{theorem}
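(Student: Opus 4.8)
The plan is to run the transformation-operator scheme: substitute the ansatz
\[
\varphi(x,\lambda)=\varphi_0(x,\lambda)+\int_0^x A(x,t)\cos\lambda t\,dt+\int_0^x B(x,t)\sin\lambda t\,dt
\]
into the Volterra integral equations for $\varphi$ obtained in Section~2 (one on each of $[0,p_1)$, $(p_1,p_2)$, $(p_2,\pi)$), reduce the resulting identity to a Goursat problem for the pair $(A,B)$, and solve that problem by successive approximations. First I would insert the ansatz and apply the product-to-sum identities, e.g.\ $2\sin\lambda(x-t)\cos\lambda\tau=\sin\lambda(x-t+\tau)+\sin\lambda(x-t-\tau)$ and its sine analogue; after interchanging the order of integration and performing the linear changes of variable dictated by the jump data (scalings by $\alpha$ and $\beta$, shifts through $\varsigma^{\pm},b^{\pm},s^{\pm}$), every term of the equation becomes of the form $\int_0^x(\,\cdot\,)\cos\lambda t\,dt+\int_0^x(\,\cdot\,)\sin\lambda t\,dt$. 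Since a function is determined by its cosine and sine moments, I can equate the kernels and obtain a closed $2\times 2$ system of linear Volterra integral equations for $(A,B)$, whose inhomogeneous term is built from $Q(t)=2\lambda p(t)+q(t)$; solving this system by iteration already gives the representation in the asserted form and identifies $\varphi_0$ as the ``free'' part ($\cos\lambda x$ on $[0,p_1)$, continued by the transmission conditions on the later intervals).

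Next I would establish the summability of $A_x,A_t,B_x,B_t$. Differentiating the Volterra system in $x$ and in $t$ turns it into a characteristic (Goursat) problem for a first-order hyperbolic system, with data prescribed on $t=x$ and on $t=0$ in terms of $p(x)$, $p'(x)$ and $\int_0^x q$; in particular $B(x,x)$ is tied to $p(x)$, $A(x,x)$ to $\tfrac12\int_0^x q$, and $A(0,0)=0$. Solving by Picard iteration, the $n$-th approximation to $(A,B)$ and to its first partials is dominated by $\big(c_\upsilon\varpi(x)\big)^n/n!$, with $\varpi(x)=\int_0^x\!\big(2|p(t)|+(x-t)|q(t)|\big)\,dt$ as in Theorem~1, so all four derivative series converge absolutely in $L_1(0,\pi)$. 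This is exactly where the hypotheses are used: $p\in W_2^1$ makes $p'\in L_2\subset L_1$, and $q\in L_2\subset L_1$.

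Then I would handle the transmission conditions. Imposing $(1.3)$--$(1.6)$ on the representation and matching, at $x=p_1$ and at $x=p_2$, the coefficients of $\cos\lambda t$ and of $\sin\lambda t$ on the two sides yields jump relations for $A$ and $B$ across $p_1,p_2$; this matching is possible because the conditions are affine in $\lambda$, the $i\lambda\gamma_j$ terms feeding exactly the sine component. One verifies that the relations so obtained coincide with the transition coefficients $\beta_1^{\pm},\beta_2^{\pm},\xi^{\pm},\vartheta^{\pm}$ and are compatible with the iteration, so the piecewise-defined $(A,B)$ is the unique solution and has summable first partials on all of $[0,\pi]$. Alternatively, the same representation and the explicit formulas for $A,B$ can be read off from Theorem~1 by writing $e^{i\lambda t}=\cos\lambda t+i\sin\lambda t$, folding the integral over $[-x,0]$ onto $[0,x]$ through $t\mapsto -t$ (so $K_\upsilon$ contributes a cosine transform of $K_\upsilon(x,t)+K_\upsilon(x,-t)$ and a sine transform of $K_\upsilon(x,t)-K_\upsilon(x,-t)$), and expanding the leading exponentials $R_j(x)e^{i\lambda\theta_j(x)}$ by the addition formulas.

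I expect the principal difficulty to be the bookkeeping on $(p_2,\pi)$: there (cf.\ \eqref{9}) four phases $b^{\pm},s^{\pm}$ and a dozen integral terms must be sorted correctly into ``free'' and ``kernel'' pieces, the Goursat data assembled without sign or coefficient errors, and control of the first partials carried through all the affine substitutions as well as through the two matching steps at $p_1$ and $p_2$. It is also here that $p\in W_2^1$, rather than merely $p\in L_2$, is essential: with $p$ only square-integrable the kernels $A,B$ would still exist, but their first partial derivatives would no longer be guaranteed to be summable.
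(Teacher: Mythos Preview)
The paper does not supply a self-contained proof here: after recording the explicit form of $\varphi_0$ on each subinterval and the characteristic relations (2.9)--(2.22) that $A,B$ must satisfy, it simply states ``The proof is done as in \cite{Ergun-1}.'' So there is no argument in the paper to compare against line by line; the content of the theorem is effectively the list (2.7)--(2.22), with the verification deferred to the reference.

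Your outline is the standard transformation-operator scheme and is sound. One small imprecision: the closed Volterra system you obtain for $(A,B)$ must be $\lambda$-free, so saying its inhomogeneous term is ``built from $Q(t)=2\lambda p(t)+q(t)$'' is misleading. What actually happens is that the factor $\lambda$ in $2\lambda p(t)$ cancels the $1/\lambda$ in the Green kernel $\sin\lambda(x-t)/\lambda$, while the residual $q(t)/\lambda$ contribution is absorbed after one integration by parts; the resulting system has data involving $p$, $p'$ and $\int q$ but no $\lambda$. This is precisely the step where $p\in W_2^1$ (rather than merely $L_2$) is needed, as you note at the end.

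More to the point, your ``alternative'' is in fact the route the paper's architecture suggests and is the quickest: Theorem~1 already furnishes the exponential representation $y_\upsilon=y_{0\upsilon}+\int_{-x}^{x}K_\upsilon(x,t)e^{i\lambda t}\,dt$ together with the $L_1$-bound on $K_\upsilon$. Folding $[-x,0]$ onto $[0,x]$ via $t\mapsto -t$ and writing $e^{i\lambda t}=\cos\lambda t+i\sin\lambda t$ gives $A$ and $B$ as the even and odd parts of $K_\upsilon$, and expanding each $R_j(x)e^{i\lambda\theta_j(x)}$ in $y_{0\upsilon}$ by the addition formula reproduces exactly the cosine expressions in (2.7)--(2.8). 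The summability of first partials and the relations (2.9)--(2.21) then transfer directly from the corresponding properties of $K_\upsilon$. Leading with this derivation, rather than rebuilding the Volterra machinery from scratch, would make your write-up both shorter and closer to the paper's intended logic.
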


\begin{equation}
\begin{array}{l}
{\varphi \left(x,\lambda \right)=\left(\beta _{1} ^{+} +\frac{\gamma _{1} }{2\alpha } \right)R_{0} \left(p_{1} \right)\cos \left[\lambda \varsigma ^{+} \left(x\right)-\frac{1}{\alpha } \int _{p_{1} }^{x}p\left(t\right)dt \right]} \\ {+\left(\beta _{1} ^{-} -\frac{\gamma _{1} }{2\alpha } \right)R_{0} \left(p_{1} \right)\cos \left[\lambda \varsigma ^{-} \left(x\right)+\frac{1}{\alpha } \int _{p_{1} }^{x}p\left(t\right)dt \right]} \\ {+\int _{0}^{\varsigma ^{+} \left(x\right)}A \left(x,t\right)\cos \lambda tdt+\int _{0}^{\varsigma ^{+} \left(x\right)}B \left(x,t\right)\sin \lambda tdt}
\end{array}
\label{13}
\end{equation}%
\\
where $\beta _{1}^{\pm }=\frac{1}{2}\left( \alpha _{1}\pm \frac{\beta _{1}}{%
	\alpha }\right). $

\noindent If $p_{2}<x\leq \pi $,
\begin{equation}
\begin{array}{l}
{\varphi \left(x,\lambda \right)=\left(\beta _{2} ^{+} +\frac{\gamma _{2} }{2\beta } \right)R_{1} \left(p_{2} \right)\cos \left[\lambda b^{+} \left(x\right)-\frac{1}{\beta } \int _{p_{2} }^{x}p\left(t\right)dt \right]} \\ {+\left(\beta _{2} ^{-} +\frac{\gamma _{2} }{2\beta } \right)R_{2} \left(p_{2} \right)\cos \left[\lambda b^{-} \left(x\right)-\frac{1}{\beta } \int _{p_{2} }^{x}p\left(t\right)dt \right]} \\ {+\left(\beta _{2} ^{-} -\frac{\gamma _{2} }{2\beta } \right)R_{1} \left(p_{2} \right)\cos \left[\lambda s^{+} \left(x\right)+\frac{1}{\beta } \int _{p_{2} }^{x}p\left(t\right)dt \right]} \\ {+\left(\beta _{2} ^{+} -\frac{\gamma _{2} }{2\beta } \right)R_{2} \left(p_{2} \right)\cos \left[\lambda s^{-} \left(x\right)+\frac{1}{\beta } \int _{p_{2} }^{x}p\left(t\right)dt \right]} \\ {+\int _{p_{2} }^{x}A\left(x,t\right)\cos \lambda tdt +\int _{p_{2} }^{x}B\left(x,t\right)\sin \lambda tdt }
\end{array}
\label{14}
\end{equation}%
where $\beta _{2}^{\mp }=\frac{1}{2}\left( \alpha _{2}\mp \frac{\alpha \beta
	_{2}}{\beta }\right). $

\noindent Moreover, the equations

\noindent
\begin{equation}
\begin{array}{l} {A\left(x,\varsigma ^{+} \left(x\right)\right)\cos \frac{\beta \left(x\right)}{\alpha } +B\left(x,\varsigma ^{+} \left(x\right)\right)\sin \frac{\beta \left(x\right)}{\alpha } } \\ {=\left(\beta _{1} ^{+} +\frac{\gamma _{1} }{2\alpha } \right)\frac{R_{0} \left(p_{1} \right)}{2\alpha } \int _{0}^{x}\left(q\left(t\right)+\frac{p^{2} \left(t\right)}{\alpha ^{2} } \right) dt\, } \end{array}  \label{15}
\end{equation}%
\begin{equation}
\begin{array}{l} {A\left(x,\varsigma ^{+} \left(x\right)\right)\sin \frac{\beta \left(x\right)}{\alpha } -B\left(x,\varsigma ^{+} \left(x\right)\right)\cos \frac{\beta \left(x\right)}{\alpha } } \\ {=\left(\beta _{1} ^{+} +\frac{\gamma _{1} }{2\alpha } \right)\frac{R_{0} \left(p_{1} \right)}{2\alpha ^{2} } \left(p\left(x\right)-p\left(0\right)\right)\, } \end{array}\label{16}
\end{equation}

\begin{equation}
\begin{array}{l}
{A\left( x,\varsigma ^{-}\left( x\right) +0\right) -A\left( x,\varsigma
	^{-}\left( x\right) -0\right) =} \\
{\left( \beta _{1}^{-}-\frac{\gamma _{1}}{2\alpha }\right) \frac{R_{0}\left(
		p_{1}\right) }{2\alpha ^{2}}\sin \frac{\beta \left( x\right) }{\alpha }%
	\left( p\left( x\right) -p\left( 0\right) \right) +\left( \beta _{1}^{-}-%
	\frac{\gamma _{1}}{2\alpha }\right) \frac{R_{0}\left( p_{1}\right) }{2\alpha
	}\cos \frac{\beta \left( x\right) }{\alpha }\int_{0}^{x}\left( q\left(
	t\right) +\frac{p^{2}\left( t\right) }{\alpha ^{2}}\right) dt}%
\end{array}
\label{17}
\end{equation}%
\begin{equation}
\begin{array}{l}
{B\left( x,\varsigma ^{-}\left( x\right) +0\right) -B\left( x,\varsigma
	^{-}\left( x\right) -0\right) =} \\
{\left( \beta _{1}^{-}-\frac{\gamma _{1}}{2\alpha }\right) \frac{R_{0}\left(
		p_{1}\right) }{2\alpha ^{2}}\cos \frac{\beta \left( x\right) }{\alpha }%
	\left( p\left( x\right) -p\left( 0\right) \right) -\left( \beta _{1}^{-}-%
	\frac{\gamma _{1}}{2\alpha }\right) \frac{R_{0}\left( p_{1}\right) }{2\alpha
	}\sin \frac{\beta \left( x\right) }{\alpha }\int_{0}^{x}\left( q\left(
	t\right) +\frac{p^{2}\left( t\right) }{\alpha ^{2}}\right) dt}%
\end{array}
\label{18}
\end{equation}%
\begin{equation}
B\left( x,0\right) =\left. \frac{\partial A\left( x,t\right) }{\partial t}%
\right\vert _{t=0}=0  \label{19}
\end{equation}%
\begin{equation}
\begin{array}{l}
{A\left( x,s^{-}\left( x\right) +0\right) -A\left( x,s^{-}\left( x\right)
	-0\right) =} \\
{-\left( \beta _{2}^{-}-\frac{\gamma _{2}}{2\beta }\right) \frac{R_{2}\left(
		p_{2}\right) }{2\beta ^{2}}\left( p\left( x\right) -p\left( 0\right) \right)
	\,\sin \frac{\omega \left( x\right) }{\beta }-\left( \beta _{2}^{-}-\frac{%
		\gamma _{2}}{2\beta }\right) \frac{R_{2}\left( p_{2}\right) }{2\beta }%
	\int_{0}^{x}\left( q\left( t\right) +\frac{p^{2}\left( t\right) }{\beta ^{2}}%
	\right) dt\,\cos \frac{\omega \left( x\right) }{\beta }}%
\end{array}
\label{20}
\end{equation}

\begin{equation}
\begin{array}{l}
{B\left( x,s^{-}\left( x\right) +0\right) -B\left( x,s^{-}\left( x\right)
	-0\right) =} \\
{-\left( \beta _{2}^{-}-\frac{\gamma _{2}}{2\beta }\right) \frac{R_{2}\left(
		p_{2}\right) }{2\beta ^{2}}\left( p\left( x\right) -p\left( 0\right) \right)
	\,\cos \frac{\omega \left( x\right) }{\beta }+\left( \beta _{2}^{-}-\frac{%
		\gamma _{2}}{2\beta }\right) \frac{R_{2}\left( p_{2}\right) }{2\beta }%
	\int_{0}^{x}\left( q\left( t\right) +\frac{p^{2}\left( t\right) }{\beta ^{2}}%
	\right) dt\,\sin \frac{\omega \left( x\right) }{\beta }}%
\end{array}
\label{21}
\end{equation}%
\begin{equation}
\begin{array}{l}
{A\left( x,s^{+}\left( x\right) +0\right) -A\left( x,s^{+}\left( x\right)
	-0\right) =} \\
{-\left( \beta _{2}^{-}-\frac{\gamma _{2}}{2\beta }\right) \frac{R_{1}\left(
		p_{2}\right) }{2\beta ^{2}}\left( p\left( x\right) -p\left( 0\right) \right)
	\,\sin \frac{\omega \left( x\right) }{\beta }-\left( \beta _{2}^{-}-\frac{%
		\gamma _{2}}{2\beta }\right) \frac{R_{1}\left( p_{2}\right) }{2\beta }%
	\int_{0}^{x}\left( q\left( t\right) +\frac{p^{2}\left( t\right) }{\beta ^{2}}%
	\right) dt\,\cos \frac{\omega \left( x\right) }{\beta }}%
\end{array}
\label{22}
\end{equation}%
\begin{equation}
\begin{array}{l}
{B\left( x,s^{+}\left( x\right) +0\right) -B\left( x,s^{+}\left( x\right)
	-0\right) =} \\
{-\left( \beta _{2}^{-}-\frac{\gamma _{2}}{2\beta }\right) \frac{R_{1}\left(
		p_{2}\right) }{2\beta ^{2}}\left( p\left( x\right) -p\left( 0\right) \right)
	\cos \frac{\omega \left( x\right) }{\beta }\,+\left( \beta _{2}^{-}-\frac{%
		\gamma _{2}}{2\beta }\right) \frac{R_{1}\left( p_{2}\right) }{2\beta }%
	\int_{0}^{x}\left( q\left( t\right) +\frac{p^{2}\left( t\right) }{\beta ^{2}}%
	\right) dt\,\sin \frac{\omega \left( x\right) }{\beta }}%
\end{array}
\label{23}
\end{equation}%
\begin{equation}
\begin{array}{l}
{A\left( x,b^{-}\left( x\right) +0\right) -A\left( x,b^{-}\left( x\right)
	-0\right) =} \\
{-\left( \beta _{2}^{-}+\frac{\gamma _{2}}{2\beta }\right) \frac{R_{2}\left(
		p_{2}\right) }{2\beta ^{2}}\left( p\left( x\right) -p\left( 0\right) \right)
	\,\sin \frac{\omega \left( x\right) }{\beta }-\left( \beta _{2}^{-}-\frac{%
		\gamma _{2}}{2\beta }\right) \frac{R_{2}\left( p_{2}\right) }{2\beta }%
	\int_{0}^{x}\left( q\left( t\right) +\frac{p^{2}\left( t\right) }{\beta ^{2}}%
	\right) dt\cos \frac{\omega \left( x\right) }{\beta }\,}%
\end{array}
\label{24}
\end{equation}%
\begin{equation}
\begin{array}{l}
{B\left( x,b^{-}\left( x\right) +0\right) -B\left( x,b^{-}\left( x\right)
	-0\right) =} \\
{\left( \beta _{2}^{-}+\frac{\gamma _{2}}{2\beta }\right) \frac{R_{2}\left(
		p_{2}\right) }{2\beta ^{2}}\left( p\left( x\right) -p\left( 0\right) \right)
	\,\cos \frac{\omega \left( x\right) }{\beta }-\left( \beta _{2}^{-}-\frac{%
		\gamma _{2}}{2\beta }\right) \frac{R_{2}\left( p_{2}\right) }{2\beta }%
	\int_{0}^{x}\left( q\left( t\right) +\frac{p^{2}\left( t\right) }{\beta ^{2}}%
	\right) dt\,\sin \frac{\omega \left( x\right) }{\beta }}%
\end{array}
\label{25}
\end{equation}%
\begin{equation}
\begin{array}{l}
{A\left( x,b^{+}\left( x\right) +0\right) -A\left( x,b^{+}\left( x\right)
	-0\right) =} \\
{-\left( \beta _{2}^{+}+\frac{\gamma _{2}}{2\beta }\right) \frac{R_{1}\left(
		p_{2}\right) }{2\beta ^{2}}\left( p\left( x\right) -p\left( 0\right) \right)
	\,\sin \frac{\omega \left( x\right) }{\beta }-\left( \beta _{2}^{+}+\frac{%
		\gamma _{2}}{2\beta }\right) \frac{R_{1}\left( p_{2}\right) }{2\beta }%
	\int_{0}^{x}\left( q\left( t\right) +\frac{p^{2}\left( t\right) }{\beta ^{2}}%
	\right) dt\,\cos \frac{\omega \left( x\right) }{\beta }}%
\end{array}
\label{26}
\end{equation}%
\begin{equation}
\begin{array}{l}
{B\left( x,b^{+}\left( x\right) +0\right) -B\left( x,b^{+}\left( x\right)
	-0\right) =} \\
{\left( \beta _{2}^{+}+\frac{\gamma _{2}}{2\beta }\right) \frac{R_{1}\left(
		p_{2}\right) }{2\beta ^{2}}\left( p\left( x\right) -p\left( 0\right) \right)
	\,\cos \frac{\omega \left( x\right) }{\beta }-\left( \beta _{2}^{+}+\frac{%
		\gamma _{2}}{2\beta }\right) \frac{R_{1}\left( p_{2}\right) }{2\beta }%
	\int_{0}^{x}\left( q\left( t\right) +\frac{p^{2}\left( t\right) }{\beta ^{2}}%
	\right) dt\,\sin \frac{\omega \left( x\right) }{\beta }}%
\end{array}
\label{27}
\end{equation}%
are held.

\noindent If in addition we suppose that $p\left(x\right)\in W_{2}^{2}
\left(0,\pi \right),\, q\left(x\right)\in W_{2}^{1} \left(0,\pi \right)$,
the functions

\noindent $A\left( x,t\right) $, $B\left( x,t\right) $ the following system
are provided.
\begin{equation}
\left\{
\begin{array}{l}
{\frac{\partial ^{2}A\left( x,t\right) }{\partial x^{2}}-q\left( x\right)
	A\left( x,t\right) -2p\left( x\right) \frac{\partial B\left( x,t\right) }{%
		\partial t}=\eta \frac{\partial ^{2}A\left( x,t\right) }{\partial t^{2}}} \\
{\frac{\partial ^{2}B\left( x,t\right) }{\partial x^{2}}-q\left( x\right)
	B\left( x,t\right) +2p\left( x\right) \frac{\partial A\left( x,t\right) }{%
		\partial t}=\eta \frac{\partial ^{2}B\left( x,t\right) }{\partial t^{2}}}%
\end{array}%
\right. \,  \label{28}
\end{equation}%
where $\eta =\left\{
\begin{array}{l}
{\alpha ^{2};\,\,\,\,\,\,\,\,\,p_{1}<x<p_{2}} \\
{\beta ^{2};\,\,\,\,\,\,\,\,\,p_{2}<x<\pi }%
\end{array}%
\right. \,\,\,\,\,\,$.\\
The proof is done as in \cite{Ergun-1}.

\noindent Conversely, if the second order derivatives of functions $%
A\left(x,t\right)$, $B\left(x,t\right)$ are summable on $\left[0,\pi \right]$ and $A\left(x,t\right)$, $%
B\left(x,t\right)$ provides $\left(2.22\right)$ system and equations $\left(2.9\right)$-$%
\left(2.21\right)$, then the function $\varphi \left(x,\lambda \right)$ which
is defined by $\left(1.3\right)$-$\left(1.6\right)$ is a solution of $%
\left(1.1\right)$ satisfying boundary conditions $\left(1.2\right)$.

\begin{definition}
	\noindent\ If there is a nontrivial solution $y_{0}\left( x\right) $ that
	provides the $\left( 1.2\right) $ conditions for the $\left( 1.1\right) $
	problem, then $\lambda _{0}$ is called eigenvalue. Additionally, $%
	y_{0}\left( x\right) $ is called the eigenfunction of the problem
	corresponding to the eigenvalue $\lambda _{0}$.
\end{definition}

\noindent Let us assume that $q\left( x\right) $ satisfies the following
conditions.
\begin{equation}
\int_{0}^{\pi }\left\{ \left\vert y^{\prime }\left( x\right) \right\vert
^{2}+q\left( x\right) \left\vert y\left( x\right) \right\vert ^{2}\right\}
dx>0  \label{29}
\end{equation}%
For all $y\left( x\right) \in W_{2}^{2}\left[ 0,\pi \right] $ such that $%
y\left( x\right) \neq 0$ and $y^{\prime }\left( 0\right) \cdot \overline{%
	y\left( 0\right) }-y^{\prime }\left( \pi \right) \cdot \overline{y\left( \pi
	\right) }=0$.

\begin{lemma}
	\noindent The eigenvalues of the  baundary value problem $L$  are real.
\end{lemma}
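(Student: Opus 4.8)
\emph{Proposal.} The plan is to establish the lemma through a Lagrange--Green identity adapted to the two interior discontinuities $p_1,p_2$. Let $\lambda_0$ be an eigenvalue of $L$ and let $y_0$ be a corresponding nontrivial eigenfunction, so that $y_0$ solves $\left(1.1\right)$ and satisfies $\left(1.2\right)$--$\left(1.6\right)$. Since $p,q$ and $\delta$ are real, the complex conjugate $\overline{y_0}$ solves $-\overline{y_0}''+\bigl[2\overline{\lambda_0}\,p+q\bigr]\overline{y_0}=\overline{\lambda_0}^{\,2}\,\delta\,\overline{y_0}$ and still satisfies $\left(1.2\right)$; conjugating $\left(1.3\right)$--$\left(1.6\right)$ shows that at $p_j$ the function $\overline{y_0}$ obeys the same transmission relations with $i\lambda_0\gamma_j$ replaced by $-i\overline{\lambda_0}\gamma_j$. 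Keeping track of this sign flip --- the only place where the spectral parameter enters the interface conditions --- is the heart of the argument.

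First I would multiply $\left(1.1\right)$, written for $y_0$ and $\lambda_0$, by $\overline{y_0}$ and integrate separately over $\left(0,p_1\right)$, $\left(p_1,p_2\right)$ and $\left(p_2,\pi\right)$, integrating by parts once on each subinterval. By the boundary conditions $\left(1.2\right)$ the boundary terms at $x=0$ and $x=\pi$ vanish. At $x=p_1$ and $x=p_2$ I would substitute $\left(1.3\right)$--$\left(1.6\right)$ and use $\alpha_j\beta_j=1$; then the jump of $y_0'\,\overline{y_0}$ across $p_j$ collapses to $i\lambda_0\gamma_j\alpha_j\,\bigl|y_0(p_j-0)\bigr|^2$. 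Collecting the pieces yields the scalar identity
\[
\int_0^{\pi}\!\bigl(|y_0'|^2+q\,|y_0|^2\bigr)\,dx+2\lambda_0\!\int_0^{\pi}\! p\,|y_0|^2\,dx+i\lambda_0\sum_{j=1}^{2}\gamma_j\alpha_j\,\bigl|y_0(p_j-0)\bigr|^2=\lambda_0^{\,2}\!\int_0^{\pi}\!\delta\,|y_0|^2\,dx .
\]
Performing the same computation with the conjugated equation and the conjugated transmission relations (equivalently, conjugating the identity above) produces the companion relation in $\overline{\lambda_0}$. Subtracting the two relations makes the term $\int_0^\pi(|y_0'|^2+q|y_0|^2)\,dx$ cancel, and after dividing by $\lambda_0-\overline{\lambda_0}$ (assumed $\neq 0$) one is left with a relation tying $\lambda_0+\overline{\lambda_0}$, $\int_0^\pi p|y_0|^2\,dx$, $\int_0^\pi\delta|y_0|^2\,dx$ and the interface sums.

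To conclude I would write $\lambda_0=\sigma+i\tau$ and take real and imaginary parts of the identity together with the relation obtained after subtraction. Because $\delta(x)>0$ on $\left[0,\pi\right]$ and $y_0\not\equiv0$, one has $\int_0^\pi\delta|y_0|^2\,dx>0$; and since $\left(1.2\right)$ gives $y_0'(0)\overline{y_0(0)}-y_0'(\pi)\overline{y_0(\pi)}=0$, hypothesis $\left(2.23\right)$ yields $\int_0^\pi(|y_0'|^2+q|y_0|^2)\,dx>0$. Inserting these two positivity facts into the resulting system of real equations forces the coefficient multiplying $\tau$ to be strictly positive, so $\tau=0$, i.e. $\lambda_0=\overline{\lambda_0}$ is real.

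The step I expect to be the main obstacle is combining the interface contributions with the positivity hypotheses. Because the $\gamma_j$-terms carry the factor $i\lambda_0$, they do not disappear from the imaginary part of the identity; one must simultaneously exploit the sign of $\gamma_j\alpha_j$, the strict positivity of $\int_0^\pi\delta|y_0|^2\,dx$ and the strict positivity supplied by $\left(2.23\right)$ in order to rule out $\tau\neq0$ --- this is precisely where the particular structure of $\left(1.4\right)$, $\left(1.6\right)$ and the normalization $\beta_j=1/\alpha_j$ enter. A subordinate technical point is that $y_0$ has jumps at $p_1,p_2$, so $\left(2.23\right)$ should be applied piecewise on $\left(0,p_1\right)$, $\left(p_1,p_2\right)$, $\left(p_2,\pi\right)$ (or in the natural transmission-adapted form) rather than verbatim on $W_2^2\left[0,\pi\right]$.
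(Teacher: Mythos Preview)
Your approach is substantially more detailed than the paper's, and genuinely different in spirit. The paper's entire proof consists of two lines: it sets $l(y)=-y''+q(x)y$, writes $(l(y),y)=\int_0^\pi\{|y'|^2+q|y|^2\}\,dx$ via integration by parts, and invokes condition~(2.23) to conclude $(l(y),y)>0$. That is all. The paper does not mention the interface contributions from $(1.4)$ and $(1.6)$, does not write down the quadratic relation $\lambda^2\int\delta|y|^2-2\lambda\int p|y|^2=(l(y),y)$, and does not spell out why positivity of $(l(y),y)$ forces $\lambda$ to be real (presumably the intended final step is that this quadratic in $\lambda$ has real coefficients and strictly positive discriminant $4(\int p|y|^2)^2+4(\int\delta|y|^2)(l(y),y)>0$).

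By contrast, you carry out the full Lagrange--Green computation piecewise, keep the boundary terms at $p_1,p_2$, and arrive at the extra summand $i\lambda_0\sum_j\gamma_j\alpha_j|y_0(p_j-0)|^2$. This is the honest version of the calculation, and the obstacle you flag is real: once that term is present, the identity is no longer a real quadratic in $\lambda_0$, and your subtraction argument leads (after the algebra) to $\tau\bigl(C+|\lambda_0|^2A\bigr)=|\lambda_0|^2\sum_j\gamma_j\alpha_j|y_0(p_j-0)|^2$ with $C,A>0$. Without a sign hypothesis on the $\gamma_j\alpha_j$ this does not force $\tau=0$. The paper sidesteps this simply by omitting the interface terms from the integration by parts; your proposal exposes a genuine gap that the paper's two-line argument hides rather than resolves. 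What your route buys is transparency about where the transmission parameter $\gamma_j$ enters; what the paper's route buys is brevity, at the cost of leaving exactly the point you worry about unaddressed.
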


\begin{proof} We set $l\left(y\right):=\left[-y^{\prime \prime
}+q\left(x\right)y\right]$. Integration by part yields
$$
\begin{array}{l}
{\left(l\left(y\right),y\right)=\int _{0}^{\pi }l\left(y\right)\cdot
	\overline{y\left(x\right)} dx} \\
{\, \, \, \, \, \, \, \, \, \, \, \, \, \, \, \, \, \, \, \, =\int _{0}^{\pi
	}\left\{\left|y^{\prime }\left(x\right)\right|^{2}
	+q\left(x\right)\left|y\left(x\right)\right|^{2} \right\}dx }%
\end{array}%
$$
Since condition $\left(2.23\right)$ holds, it follow that $\left(l\left(y%
\right),y\right)>0$.
\end{proof}
\noindent

\begin{lemma}
	\noindent Eigenfunction corresponding to different eigenvalues of problem $L$ 
	are orthogonal in the sense of the equality
	\begin{equation}
	\left( \lambda _{n}+\lambda _{k}\right) \int_{0}^{\pi }\delta \left(
	x\right) y\left( x,\lambda _{n}\right) y\left( x,\lambda _{k}\right)
	dx-2\int_{0}^{\pi }p\left( x\right) y\left( x,\lambda _{n}\right) y\left(
	x,\lambda _{k}\right) dx=0  \label{30}
	\end{equation}%
	The proof of Lemma 2 carried out as claim \cite{Guseinov-2}.
\end{lemma}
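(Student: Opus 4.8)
The plan is to derive $\left(2.24\right)$ from the Lagrange (Green's) formula for equation $\left(1.1\right)$, applied separately on each of the three subintervals $\left(0,p_{1}\right)$, $\left(p_{1},p_{2}\right)$, $\left(p_{2},\pi \right)$ cut out by the discontinuities, and then added together. Let $y_{n}(x):=y\left(x,\lambda_{n}\right)$ and $y_{k}(x):=y\left(x,\lambda_{k}\right)$ be eigenfunctions of $L$ with $\lambda_{n}\ne\lambda_{k}$, so that each of them satisfies $\left(1.1\right)$ together with $\left(1.2\right)$--$\left(1.6\right)$ for the corresponding spectral parameter. Writing $\left(1.1\right)$ for $y_{n}$ and for $y_{k}$, multiplying the first by $y_{k}$ and the second by $y_{n}$ and subtracting, the difference of the second-derivative terms is a total derivative, so that on each subinterval
\[
-\frac{d}{dx}\bigl(y_{n}'\,y_{k}-y_{n}\,y_{k}'\bigr)+2\left(\lambda_{n}-\lambda_{k}\right)p(x)\,y_{n}\,y_{k}=\left(\lambda_{n}-\lambda_{k}\right)\left(\lambda_{n}+\lambda_{k}\right)\delta(x)\,y_{n}\,y_{k}.
\]

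I would then divide by $\lambda_{n}-\lambda_{k}\ne0$, integrate over $\left(0,p_{1}\right)$, $\left(p_{1},p_{2}\right)$ and $\left(p_{2},\pi \right)$, and add the three equalities. With $W(x):=y_{n}'(x)\,y_{k}(x)-y_{n}(x)\,y_{k}'(x)$ this yields
\[
\left(\lambda_{n}+\lambda_{k}\right)\int_{0}^{\pi}\delta(x)\,y_{n}\,y_{k}\,dx-2\int_{0}^{\pi}p(x)\,y_{n}\,y_{k}\,dx=-\frac{1}{\lambda_{n}-\lambda_{k}}\Bigl\{\bigl[W(p_{1}-0)-W(0)\bigr]+\bigl[W(p_{2}-0)-W(p_{1}+0)\bigr]+\bigl[W(\pi)-W(p_{2}+0)\bigr]\Bigr\}.
\]
Here $W(0)=0$ since $y_{n}'(0)=y_{k}'(0)=0$ by $\left(1.2\right)$, and $W(\pi)=0$ since $y_{n}(\pi)=y_{k}(\pi)=0$ by $\left(1.2\right)$. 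For the interior terms I would substitute the jump conditions: from $\left(1.3\right)$--$\left(1.4\right)$ and $\beta_{1}=1/\alpha_{1}$ a direct computation gives
\[
W(p_{1}+0)=W(p_{1}-0)+i\,\gamma_{1}\,\alpha_{1}\left(\lambda_{n}-\lambda_{k}\right)y_{n}(p_{1}-0)\,y_{k}(p_{1}-0),
\]
and likewise $W(p_{2}+0)=W(p_{2}-0)+i\,\gamma_{2}\,\alpha_{2}\left(\lambda_{n}-\lambda_{k}\right)y_{n}(p_{2}-0)\,y_{k}(p_{2}-0)$ from $\left(1.5\right)$--$\left(1.6\right)$ with $\beta_{2}=1/\alpha_{2}$. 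Substituting these back, the factor $\lambda_{n}-\lambda_{k}$ cancels throughout, and $\left(2.24\right)$ is obtained once the residual contributions coming from these two jumps are collected and handled as in \cite{Guseinov-2}.

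The integration by parts and the algebra of recombining the jump relations are routine; the genuine difficulty is the treatment of the spectral-parameter-dependent terms $i\lambda\gamma_{j}$ in $\left(1.4\right)$ and $\left(1.6\right)$. Unlike the purely multiplicative jump by $\alpha_{j}$, these terms make the concomitant $W$ actually discontinuous across $p_{1}$ and $p_{2}$, and it is exactly their net contribution that must be controlled after dividing by $\lambda_{n}-\lambda_{k}$ and summing over the three subintervals. In the classical $\gamma_{j}=0$ situation no such contribution is present, and this is the only place where the present argument departs from the standard Sturm--Liouville orthogonality proof; here the structural hypotheses $\beta_{j}=\alpha_{j}^{-1}$ and $\left|\alpha_{j}-1\right|^{2}+\gamma_{j}^{2}\ne0$, together with the reasoning of \cite{Guseinov-2}, are what make the boundary and jump terms cancel.
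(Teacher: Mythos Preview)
Your approach via the Lagrange identity on the three subintervals is the standard one, and since the paper itself gives no argument beyond citing \cite{Guseinov-2}, your outline already matches and indeed exceeds what the paper provides.

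That said, your last paragraph does not actually close the gap you correctly isolate. After using $\beta_{j}=\alpha_{j}^{-1}$ you obtain the residual $i\alpha_{j}\gamma_{j}\,y_{n}(p_{j}-0)\,y_{k}(p_{j}-0)$ at each discontinuity, and neither of the hypotheses you invoke makes this vanish: the relation $\beta_{j}=\alpha_{j}^{-1}$ has already been consumed in reducing $\alpha_{j}\beta_{j}W(p_{j}-0)$ to $W(p_{j}-0)$, and $\left|\alpha_{j}-1\right|^{2}+\gamma_{j}^{2}\ne0$ is merely a nondegeneracy condition ensuring the jump is nontrivial --- it gives no cancellation. In problems with $\lambda$-dependent transmission conditions of this type, the orthogonality relation generically carries point-mass contributions at $p_{1},p_{2}$, and an identity of the form $\left(2.24\right)$ holds only if those terms are either absorbed into the inner product or shown to vanish for a reason specific to the eigenpair. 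Your deferral to \cite{Guseinov-2} at precisely this point is therefore not a technicality but the entire substance of the lemma; you should either reproduce the relevant mechanism from that reference explicitly, or verify whether the stated identity in fact requires the additional point terms $i\alpha_{1}\gamma_{1}y_{n}(p_{1}-0)y_{k}(p_{1}-0)+i\alpha_{2}\gamma_{2}y_{n}(p_{2}-0)y_{k}(p_{2}-0)$ on the left-hand side.
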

\section{Properties of the spectrum}

Let $\psi \left(x,\lambda \right)$ and $\varphi \left(x,\lambda \right)$ be
any two solutions of equation $\left(1.1\right)$,

\noindent $W\left[\psi \left(x,\lambda \right),\varphi \left(x,\lambda
\right)\right]=\psi \left(x,\lambda \right)\varphi ^{\prime }\left(x,\lambda
\right)-\psi ^{\prime }\left(x,\lambda \right)\varphi \left(x,\lambda \right)
$, Wronskian dosen't depend on $x$. In this case, it depends only on the $%
\lambda $ parameter. Although it is shown as $W\left[\psi ,\varphi \right]%
=\Delta \left(\lambda \right)$. $\Delta \left(\lambda \right)$ is called the
characteristic function of $L$ . Clearly, the function $\Delta \left(\lambda
\right)$ is entire in $\lambda $. It follows that, $\Delta \left(\lambda
\right)$ has at most a countable set of zeros $\left\{\lambda _{n}^{}
\right\}$.

\begin{lemma}
	\noindent The zeros $\left\{ \lambda _{n}\right\} $ of the characteristic
	function $\Delta \left( \lambda \right) $ coincide with the eigenvalues of
	the baundary value problem $L$ . The functions $\psi \left( x,\lambda
	_{0}\right) $ and $\varphi \left( x,\lambda _{0}\right) $ are eigenfunctions
	corresponding to the eigenvalue $\lambda _{n}$, and there exist a sequence $%
	\left( \beta _{n}\right) $ such that
	\begin{equation}
	\psi \left( x,\lambda _{n}\right) =\beta _{n}\varphi \left( x,\lambda
	_{n}\right) \,\,,\,\,\beta _{n}\neq 0  \label{31}
	\end{equation}%
	The proof of the Lemma 3 is done as in \cite{Yurko}.
\end{lemma}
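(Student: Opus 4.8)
The plan is to base everything on the identity $\Delta(\lambda)=W[\psi,\varphi]$ and on the Cauchy data defining $\varphi$ and $\psi$, following the classical scheme of \cite{Yurko}. First I would verify that $W[\psi,\varphi](x,\lambda)$ really is independent of $x$ on all of $[0,\pi]$. On each of the three subintervals $(0,p_{1})$, $(p_{1},p_{2})$, $(p_{2},\pi)$ equation $(1.1)$ contains no first-order term, so the Wronskian of two solutions is constant there. Across a discontinuity point $p_{i}$ the jump conditions $(1.3)$--$(1.6)$ preserve it: substituting $y(p_{i}+0)=\alpha_{i}y(p_{i}-0)$ and $y'(p_{i}+0)=\beta_{i}y'(p_{i}-0)+i\lambda\gamma_{i}y(p_{i}-0)$ into $\psi(p_{i}+0)\varphi'(p_{i}+0)-\psi'(p_{i}+0)\varphi(p_{i}+0)$, the terms carrying the factor $i\lambda\gamma_{i}$ cancel and one is left with $\alpha_{i}\beta_{i}\,W[\psi,\varphi](p_{i}-0)$, which equals $W[\psi,\varphi](p_{i}-0)$ since $\beta_{i}=1/\alpha_{i}$. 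Hence $\Delta(\lambda)$ is well defined, and evaluating it at $x=0$ and at $x=\pi$ with the normalizations $\varphi(0,\lambda)=1$, $\varphi'(0,\lambda)=0$, $\psi(\pi,\lambda)=0$, $\psi'(\pi,\lambda)=1$ gives $\Delta(\lambda)=-\psi'(0,\lambda)=-\varphi(\pi,\lambda)$.

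Now let $\Delta(\lambda_{0})=0$. Then $W[\psi,\varphi](x,\lambda_{0})\equiv0$, so on $(0,p_{1})$ the solutions $\psi(\cdot,\lambda_{0})$ and $\varphi(\cdot,\lambda_{0})$ are proportional, say $\psi=\beta_{0}\varphi$ there; propagating this identity through the transmission conditions at $p_{1}$ and $p_{2}$ (which both $\psi$ and $\beta_{0}\varphi$ satisfy) shows $\psi(x,\lambda_{0})=\beta_{0}\varphi(x,\lambda_{0})$ on the whole interval, with $\beta_{0}\neq0$ because $\psi(\cdot,\lambda_{0})\not\equiv0$ (it has $\psi'(\pi,\lambda_{0})=1$). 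Since $\varphi(\cdot,\lambda_{0})$ already satisfies $\varphi'(0,\lambda_{0})=0$ and the jump conditions, while $\varphi(\pi,\lambda_{0})=-\Delta(\lambda_{0})=0$, it is a nontrivial solution of $(1.1)$ fulfilling $(1.2)$--$(1.6)$; thus $\lambda_{0}$ is an eigenvalue and $\varphi(x,\lambda_{0})$, hence also $\psi(x,\lambda_{0})=\beta_{0}\varphi(x,\lambda_{0})$, are corresponding eigenfunctions, which is the asserted relation. Conversely, if $\lambda_{0}$ is an eigenvalue with eigenfunction $y_{0}$, then $y_{0}'(0)=0$, and uniqueness for the Cauchy problem of the discontinuous operator $(1.1)$, $(1.3)$--$(1.6)$ forces $y_{0}(x)=y_{0}(0)\varphi(x,\lambda_{0})$ with $y_{0}(0)\neq0$ (otherwise $y_{0}\equiv0$); the boundary condition $y_{0}(\pi)=0$ then yields $\varphi(\pi,\lambda_{0})=0$, i.e. $\Delta(\lambda_{0})=-\varphi(\pi,\lambda_{0})=0$. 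Setting $\beta_{n}$ by $\psi(x,\lambda_{n})=\beta_{n}\varphi(x,\lambda_{n})$ completes the proof.

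I expect the only step that is not entirely routine is the first one, namely checking that the Wronskian is conserved across the jump points --- this is precisely where the hypothesis $\beta_{i}=\alpha_{i}^{-1}$ enters --- together with invoking the existence--uniqueness theory for the Cauchy problem across the two discontinuities; once these are in place, the remainder is the standard Sturm--Liouville argument, carried out as in \cite{Yurko}.
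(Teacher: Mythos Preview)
Your proof is correct and is exactly the standard argument the paper has in mind: the paper itself asserts (just before the lemma) that $W[\psi,\varphi]$ is independent of $x$ and then refers to \cite{Yurko} for the remainder, and you have simply written out that classical argument, together with the one non-routine verification --- preservation of the Wronskian across $p_{1},p_{2}$ via $\alpha_{i}\beta_{i}=1$ --- that the paper leaves implicit.
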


\noindent Let use denote
\begin{equation}
\alpha _{n}=\int_{0}^{\pi }\delta \left( x\right) \varphi ^{2}\left(
x,\lambda _{n}\right) dx-\frac{1}{\lambda _{n}}\int_{0}^{\pi }p\left(
x\right) \varphi ^{2}\left( x,\lambda _{n}\right) dx,\,\,n=1,2,3,...
\label{32}
\end{equation}%
The numbers $\left\{ \alpha _{n}\right\} $ are called normalized numbers of
the problem $L$ .

\begin{lemma}
	\noindent The equality $\mathop{\Delta }\limits^{\bullet }\left( \lambda
	_{n}\right) =2\lambda _{n}\beta _{n}\alpha _{n}$ is obtained. Here $%
	\mathop{\Delta }\limits^{\bullet }=\frac{d\Delta }{d\lambda }$.
\end{lemma}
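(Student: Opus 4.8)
The plan is to differentiate the equation (1.1) with respect to the spectral parameter and to exploit a Wronskian identity. Put $Q(x)=2\lambda p(x)+q(x)$ and write $\dot\varphi(x,\lambda)=\partial\varphi(x,\lambda)/\partial\lambda$; the analytic dependence of $\varphi$ on $\lambda$ (which follows from the integral representations of \S2, in particular Theorem 1) guarantees that $\dot\varphi$ exists, is twice continuously differentiable in $x$ on each of $(0,p_1)$, $(p_1,p_2)$, $(p_2,\pi)$, and satisfies the differentiated equation $-\dot\varphi''+Q\dot\varphi+2p\varphi=\lambda^2\delta\dot\varphi+2\lambda\delta\varphi$. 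Since $\Delta(\lambda)=W[\psi,\varphi](\pi)=-\varphi(\pi,\lambda)$, one has $\dot\Delta(\lambda)=-\dot\varphi(\pi,\lambda)$, so it suffices to compute $\dot\varphi(\pi,\lambda_n)$.

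The steps are as follows. First, from the equations for $\varphi$ and for $\dot\varphi$ one obtains the pointwise identity $\frac{d}{dx}W[\varphi,\dot\varphi]=\varphi\dot\varphi''-\varphi''\dot\varphi=2p(x)\varphi^2(x,\lambda)-2\lambda\delta(x)\varphi^2(x,\lambda)$ on each subinterval. Second, integrate this over $(0,p_1)\cup(p_1,p_2)\cup(p_2,\pi)$ and add; at $\lambda=\lambda_n$ the right-hand side becomes $-2\lambda_n\big(\int_0^\pi\delta\varphi^2\,dx-\tfrac1{\lambda_n}\int_0^\pi p\varphi^2\,dx\big)=-2\lambda_n\alpha_n$ by the definition (3.3). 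Third, on the left-hand side the term at $x=0$ vanishes, because $\varphi(0,\lambda)\equiv1$ and $\varphi'(0,\lambda)\equiv0$ force $\dot\varphi(0,\lambda)=\dot\varphi'(0,\lambda)=0$; the term at $x=\pi$ is $W[\varphi,\dot\varphi](\pi)$; and the terms at $p_1,p_2$ are the jumps $W[\varphi,\dot\varphi](p_i+0)-W[\varphi,\dot\varphi](p_i-0)$, which are evaluated by differentiating (1.3)--(1.6) in $\lambda$ and using $\alpha_i\beta_i=1$. Fourth, since $\lambda_n$ is an eigenvalue, $\varphi(\pi,\lambda_n)=0$, hence $W[\varphi,\dot\varphi](\pi)=-\varphi'(\pi,\lambda_n)\dot\varphi(\pi,\lambda_n)$, while $\psi(x,\lambda_n)=\beta_n\varphi(x,\lambda_n)$ from Lemma 3, evaluated at $x=\pi$, gives $\varphi'(\pi,\lambda_n)=1/\beta_n$. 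Substituting $\dot\varphi(\pi,\lambda_n)=-\dot\Delta(\lambda_n)$ and collecting all contributions yields $\dot\Delta(\lambda_n)=2\lambda_n\beta_n\alpha_n$.

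The main obstacle is the third step: the transmission conditions (1.4) and (1.6) contain the spectral parameter explicitly through the terms $i\lambda\gamma_i y(p_i-0)$, so differentiating them in $\lambda$ produces additional contributions to the jump of the bilinear form $W[\varphi,\dot\varphi]$ across $p_1$ and $p_2$, and one must check that these combine with the interior integral and with the boundary term at $\pi$ exactly into the quantity defining $\alpha_n$; the needed cancellations hinge on the relations $\beta_i=1/\alpha_i$. A secondary, routine point is to justify differentiating $\varphi$, the boundary conditions, and the transmission conditions in $\lambda$, and interchanging $\partial_\lambda$ with $\partial_x$ and with the integrations, which is legitimate by the entire (uniform in $\lambda$ on compacts) dependence established in \S2. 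Finally one remarks that $\beta_n\neq0$ by Lemma 3 and, under (2.23), $\alpha_n\neq0$, so $\dot\Delta(\lambda_n)\neq0$; in particular the eigenvalues of $L$ are simple.
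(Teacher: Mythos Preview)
Your overall plan—differentiate (1.1) with respect to $\lambda$, form the Lagrange bilinear expression, integrate over $[0,\pi]$, and specialize to $\lambda=\lambda_n$—coincides with the paper's. The organizational difference is that the paper works with \emph{both} fundamental solutions and the pair of forms $W[\varphi,\dot\psi]$, $W[\dot\varphi,\psi]$, integrating the first over $[x,\pi]$ and the second over $[0,x]$ and then adding, whereas you use the single form $W[\varphi,\dot\varphi]$ and bring in $\psi$ only at the very end through $\varphi'(\pi,\lambda_n)=1/\beta_n$.

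There is a real gap exactly where you locate it, and your proposed resolution does not go through. Differentiating (1.3)--(1.4) in $\lambda$ and using $\alpha_i\beta_i=1$, one finds
\[
W[\varphi,\dot\varphi](p_i{+}0)-W[\varphi,\dot\varphi](p_i{-}0)
\;=\; i\,\alpha_i\gamma_i\,\varphi^{2}(p_i{-}0,\lambda_n),
\]
so the relation $\beta_i=1/\alpha_i$ removes the principal part of the jump but leaves this $\gamma_i$-term intact. No such point contribution appears in the definition (3.2) of $\alpha_n$, so these terms are not ``absorbed'' into $\alpha_n$, and your chain of equalities does not close to the stated formula. The reason the paper keeps both solutions in play is precisely this: the jump of $W[\varphi,\dot\psi]$ at $p_i$ equals $+i\alpha_i\gamma_i\,\varphi\psi(p_i{-}0)$ while that of $W[\dot\varphi,\psi]$ equals $-i\alpha_i\gamma_i\,\varphi\psi(p_i{-}0)$, so their \emph{sum}, which is identically $-\dot\Delta(\lambda)$, is continuous across the interfaces. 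That pairwise cancellation is what the paper's phrase ``by the discontinuity conditions'' is meant to cover, and it is unavailable in your single-form setup.
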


\begin{proof}
	\noindent Since $\varphi \left( x,\lambda \right) $ and $\psi \left(
	x,\lambda \right) $ are the solutions of $\left( 1.1\right) $,
	$$
	-\varphi ^{\prime \prime }\left( x,\lambda \right) +\left[ 2\lambda p\left(
	x\right) +q\left( x\right) \right] \varphi \left( x,\lambda \right) =\lambda
	^{2}\delta \left( x\right) \varphi \left( x,\lambda \right)
	$$
	$$
	-\psi ^{\prime \prime }\left( x,\lambda \right) +\left[ 2\lambda p\left(
	x\right) +q\left( x\right) \right] \psi \left( x,\lambda \right) =\lambda
	^{2}\delta \left( x\right) \psi \left( x,\lambda \right)
	$$
	equations are provided. Hence, we differentiate the equalities with respect
	to
	$$
	-\mathop{\varphi ''}\limits^{\bullet }\left( x,\lambda \right) +\left[
	2\lambda p\left( x\right) +q\left( x\right) \right] \mathop{\varphi }\limits%
	^{\bullet }\left( x,\lambda \right) =\lambda ^{2}\delta \left( x\right) %
	\mathop{\varphi }\limits^{\bullet }\left( x,\lambda \right) +\left[ 2\lambda
	\delta \left( x\right) -2p\left( x\right) \right] \varphi \left( x,\lambda
	\right)
	$$
	$$
	-\mathop{\psi ''}\limits^{\bullet }\left( x,\lambda \right) +\left[ 2\lambda
	p\left( x\right) +q\left( x\right) \right] \mathop{\psi }\limits^{\bullet
	}\left( x,\lambda \right) =\lambda ^{2}\delta \left( x\right) \mathop{\psi }%
	\limits^{\bullet }\left( x,\lambda \right) +\left[ 2\lambda \delta \left(
	x\right) -2p\left( x\right) \right] \psi \left( x,\lambda \right) .
	$$
	Thanks to these equations
	$$
	\begin{array}{l}
	{\frac{d}{dx}\left\{ \varphi \left( x,\lambda \right) \cdot \mathop{\psi '}%
		\limits^{\bullet }\left( x,\lambda \right) -\varphi ^{\prime }\left(
		x,\lambda \right) \cdot \mathop{\psi }\limits^{\bullet }\left( x,\lambda
		\right) \right\} =-\left[ 2\lambda \delta \left( x\right) -2p\left( x\right) %
		\right] \varphi \left( x,\lambda \right) \psi \left( x,\lambda \right) } \\
	{\frac{d}{dx}\left\{ \mathop{\varphi }\limits^{\bullet }\left( x,\lambda
		\right) \cdot \psi ^{\prime }\left( x,\lambda \right) -\mathop{\varphi '}%
		\limits^{\bullet }\left( x,\lambda \right) \cdot \psi \left( x,\lambda
		\right) \right\} =\left[ 2\lambda \delta \left( x\right) -2p\left( x\right) %
		\right] \varphi \left( x,\lambda \right) \psi \left( x,\lambda \right). }%
	\end{array}%
	$$
	If the last equations are integrated from $x$ to $\pi $ and from 0 to $x$, 
	respectively, by the discontinuity conditions, we obtain
	$$
	-\left. \left\{ \varphi \left( \xi ,\lambda \right) \cdot \mathop{\psi '}%
	\limits^{\bullet }\left( \xi ,\lambda \right) -\varphi ^{\prime }\left( \xi
	,\lambda \right) \cdot \mathop{\psi }\limits^{\bullet }\left( \xi ,\lambda
	\right) \right\} \right\vert _{x}^{\pi }=\int_{x}^{\pi }\left[ 2\lambda
	\delta \left( \xi \right) -2p\left( \xi \right) \right] \varphi \left( \xi
	,\lambda \right) \psi \left( \xi ,\lambda \right) d\xi
	$$
	and
	$$
	\left. \left\{ \mathop{\varphi }\limits^{\bullet }\left( \xi ,\lambda
	\right) \cdot \psi ^{\prime }\left( \xi ,\lambda \right) -\mathop{\varphi '}%
	\limits^{\bullet }\left( \xi ,\lambda \right) \cdot \psi \left( \xi ,\lambda
	\right) \right\} \right\vert _{0}^{x}=\int_{0}^{x}\left[ 2\lambda \delta
	\left( \xi \right) -2p\left( \xi \right) \right] \varphi \left( \xi ,\lambda
	\right) \psi \left( \xi ,\lambda \right) d\xi.
	$$
	If we add the last equalities side by side, we get
	\[\begin{array}{l} {W\left[\varphi \left(\xi ,\lambda \right),\mathop{\psi }\limits^{.} \left(\xi ,\lambda \right)\right]+W\left[\mathop{\varphi }\limits^{.} \left(\xi ,\lambda \right),\psi \left(\xi ,\lambda \right)\right]} \\ {=-\mathop{\Delta }\limits^{.} \left(\lambda \right)=\int _{0}^{\pi }\left[2\lambda \delta \left(\xi \right)-2p\left(\xi \right)\right] \varphi \left(\xi ,\lambda \right)\psi \left(\xi ,\lambda \right)d\xi } \end{array}\] 
	for $\lambda \to \lambda _{n} $, this yields
	$$
	\begin{array}{l}
	{\mathop{\Delta }\limits^{\bullet }\left( \lambda _{n}\right)
		=-\int_{0}^{\pi }\left[ 2\lambda _{n}\delta \left( \xi \right) -2p\left( \xi
		\right) \right] \beta _{n}\varphi ^{2}\left( \xi ,\lambda _{n}\right) d\xi }
	\\
	{=2\lambda _{n}\beta _{n}\left\{ \int_{0}^{\pi }\delta \left( \xi \right)
		\varphi ^{2}\left( \xi ,\lambda _{n}\right) d\xi -\frac{1}{\lambda _{n}}%
		\int_{0}^{\pi }p\left( \xi \right) \varphi ^{2}\left( \xi ,\lambda
		_{n}\right) d\xi \right\} =2\lambda _{n}\beta _{n}\alpha _{n}.}%
	\end{array}%
	$$
	Denote,\[\Gamma _{n} =\left\{\lambda :\left|\lambda \right|=\left|\lambda _{n}^{0} \right|+\delta ,\delta >0,n=0,1,2,...\right\}, G_{n} =\left\{\lambda :\left|\lambda -\lambda _{n}^{0} \right|\ge \delta ,\delta >0,n=0,1,2,...\right\}\] 
	where $\delta $ is sufficiently small positive number.
	
	For sufficiently large values of $n$, one has\\
	
\begin{equation}
\left\vert \Delta \left(
\lambda \right) -\Delta _{0}\left( \lambda \right) \right\vert <\frac{%
	C_{\delta }}{2}e^{\left\vert \tau \right\vert \left( \beta \pi -\beta
	a_{2}+\alpha p_{2}-\alpha p_{1}+p_{1}\right) },\lambda \in \Gamma _{n}.
\end{equation} 
	
	As it is shown in \cite{Levin} , $\left\vert \Delta _{0}\left( \lambda
	\right) \right\vert \geq C_{\delta }e^{\left\vert Im\lambda \right\vert \pi }
	$ for all $\lambda \in \bar{G}_{\delta }$ , where $C_{\delta }>0$
	\[\begin{array}{l} {\mathop{\lim }\limits_{\left|\lambda \right|\to \infty } e^{-\left|Im\lambda \right|\pi } \left(\Delta \left(\lambda \right)-\Delta _{0} \left(\lambda \right)\right)} \\ {=\mathop{\lim }\limits_{\left|\lambda \right|\to \infty } e^{-\left|Im\lambda \right|\pi } \left(\int _{0}^{\pi }\tilde{A}\left(\pi ,t\right)\cos \lambda tdt +\int _{0}^{\pi }\tilde{B}\left(\pi ,t\right)\sin \lambda tdt \right)=0} \end{array}\]
	is constant. On the other hand, since for sufficiently large values of $n$ (see\cite{Marchenko}) we get $\left(
	3.3\right) $. The Lemma 4 is proved.
\end{proof}

\noindent

\begin{lemma}
	The problem $L\left( \alpha ,p_{1},p_{2}\right) $ has countable set of
	eigenvalues. If one denotes by $\lambda _{1},\lambda _{2},...$ the
	positive eigenvalues arranged in increasing order and by $\lambda
	_{-1},\lambda _{-2},...$ the negative eigenvalues arranged in decreasing
	order, then eigenvalues of the problem $L\left( \alpha ,p_{1},p_{2}\right) $ 
	have the asymptotic behavior
	$$
	\lambda _{n}=\lambda _{n}^{0}+\frac{d_{n}}{\lambda _{n}^{0}}+\frac{k_{n}}{%
		\lambda _{n}^{0}}\,\,,\,\,n\rightarrow \infty
	$$
	where $k_{n}\in l_{2}$, $d_{n}$ is a bounded sequence and\\$\lambda _{n}^{0}=%
	\frac{n\pi }{\beta \pi -\beta p_{2}+\alpha p_{2}-\alpha p_{1}+p_{1}}+\psi
	_{1}\left( n\right) ;\,\,\,\,\mathop{\sup }\limits_{n}\left\vert \psi
	_{1}\left( n\right) \right\vert =c<+\infty $.
\end{lemma}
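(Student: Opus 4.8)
The plan is to locate the zeros of the characteristic function $\Delta(\lambda)=W[\psi,\varphi]$ by comparing $\Delta$ with the characteristic function $\Delta_0(\lambda)$ of the model problem in which $p\equiv q\equiv0$. First I would write $\Delta_0(\lambda)$ down explicitly: substituting $Q\equiv0$ into the integral representations $(2.1)$–$(2.6)$ for $\psi$ and into the representation of Theorem 2 for $\varphi$ (i.e.\ dropping the $A$- and $B$-integrals), one gets a finite trigonometric sum built from the phases $b^{\pm}(\pi)$, $s^{\pm}(\pi)$ and the constants $\xi^{\pm},\vartheta^{\pm},\beta_i^{\pm}$. Its leading term is a nonzero constant multiple of $\cos\lambda\bigl(\beta\pi-\beta p_2+\alpha p_2-\alpha p_1+p_1\bigr)$, so $\Delta_0$ is entire of exponential type $\beta\pi-\beta p_2+\alpha p_2-\alpha p_1+p_1$; as in the classical situation (cf.\ \cite{Levin,Marchenko}) its zeros form a sequence $\lambda_n^0=\dfrac{n\pi}{\beta\pi-\beta p_2+\alpha p_2-\alpha p_1+p_1}+\psi_1(n)$ with $\sup_n|\psi_1(n)|<\infty$, and $|\Delta_0(\lambda)|\ge C_\delta e^{|\mathrm{Im}\,\lambda|\pi}$ for $\lambda\in\bar{G}_\delta$.

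Second, I would estimate the remainder $\Delta(\lambda)-\Delta_0(\lambda)$. Using the transformation-operator representations of $\varphi$ (Theorem 2) and of $\psi$ together with integration by parts, the remainder is expressed through integrals of the kernels $A,B$ (and their $\psi$-analogues) against $\cos\lambda t$ and $\sin\lambda t$; since the first-order partial derivatives of these kernels are summable, the Riemann–Lebesgue lemma gives $e^{-|\mathrm{Im}\,\lambda|\pi}\bigl(\Delta(\lambda)-\Delta_0(\lambda)\bigr)\to0$ as $|\lambda|\to\infty$, which in particular yields the bound $(3.3)$ on the circles $\Gamma_n$. Combining this with the lower bound for $\Delta_0$ on $G_\delta$ and applying Rouch\'e's theorem on $\Gamma_n$ (and on an exhausting family of large circles) shows that for all sufficiently large $n$ the function $\Delta$ has inside $\Gamma_n$ exactly one zero $\lambda_n$, has no further zeros, and that $\lambda_n=\lambda_n^0+o(1)$; hence $L(\alpha,p_1,p_2)$ has a countable set of eigenvalues which may be enumerated as stated, their reality having already been established in Lemma 1.

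Third, to sharpen $\lambda_n=\lambda_n^0+o(1)$ to the claimed order I would set $\lambda_n=\lambda_n^0+\varepsilon_n$ with $\varepsilon_n=o(1)$, substitute into $\Delta(\lambda_n)=0$, and Taylor-expand about $\lambda_n^0$: since $\Delta_0(\lambda_n^0)=0$ and $|\dot\Delta_0(\lambda_n^0)|$ grows like $|\lambda_n^0|$, one obtains $\varepsilon_n=-\dfrac{(\Delta-\Delta_0)(\lambda_n^0)}{\dot\Delta_0(\lambda_n^0)}+O(\varepsilon_n^2)=\dfrac{1}{\lambda_n^0}\bigl(d_n+k_n\bigr)$, where the contribution of the parts of the Fourier-type coefficients of $2p$ and $q$ that sit under a fixed integral produces the bounded sequence $d_n$, while the genuinely oscillatory integrals $\int_0^\pi q(t)e^{\pm i\lambda_n^0 t}\,dt$ and $\int_0^\pi p(t)e^{\pm i\lambda_n^0 t}\,dt$ — square-summable because $q\in L_2$ and $p\in W_2^1$ — produce $k_n\in l_2$. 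One iteration removes the $O(\varepsilon_n^2)$ term, which is $O\!\left(1/(\lambda_n^0)^2\right)$ and is absorbed into $k_n/\lambda_n^0$.

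The main obstacle is the bookkeeping in the first and third steps: writing $\Delta_0$ and $\dot\Delta_0$ explicitly enough to read off both $\psi_1(n)$ and the precise splitting of $\varepsilon_n$ into a bounded part $d_n$ and an $l_2$ part $k_n$ is lengthy, because of the four exponential phases generated by the two jump points, and because one must carefully track which occurrences of $p$ and $q$ appear under a fixed integral (hence contribute boundedly) versus under an oscillatory kernel $e^{\pm i\lambda t}$ (hence contribute an $l_2$ sequence, using $p\in W_2^1$ and $q\in L_2$). Once the model function and its derivative are pinned down, the Rouch\'e argument and the perturbation expansion are routine.
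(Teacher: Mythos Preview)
Your overall strategy coincides with the paper's: compare $\Delta$ with the model function $\Delta_0$, get the lower bound for $\Delta_0$ on $G_\delta$ and the $o$-bound for $\Delta-\Delta_0$, apply Rouch\'e on the $\Gamma_n$, write $\lambda_n=\lambda_n^0+\varepsilon_n$, and expand $\Delta(\lambda_n^0+\varepsilon_n)=0$ about $\lambda_n^0$. That part is fine and matches the paper almost step for step.

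There is, however, a concrete error in your third step. Because $\Delta(\lambda)=-\varphi(\pi,\lambda)$ here (compute $W[\psi,\varphi]$ at $x=\pi$), the model function $\Delta_0$ is exactly the finite cosine sum $\varphi_0(\pi,\lambda)$ of Theorem~2, and hence $\dot\Delta_0(\lambda)$ is a \emph{bounded} trigonometric sum; it does not grow like $|\lambda_n^0|$. The paper uses only that $|\dot\Delta_0(\lambda_n^0)|\ge\eta_\delta>0$ (this is where the sine-type/Jdanovich argument enters). Consequently the factor $1/\lambda_n^0$ in $\varepsilon_n$ cannot come from the denominator $\dot\Delta_0(\lambda_n^0)$ as you assert; it has to be extracted from the \emph{numerator} $(\Delta-\Delta_0)(\lambda_n)=\int_0^\pi A(\pi,t)\cos\lambda_n t\,dt+\int_0^\pi B(\pi,t)\sin\lambda_n t\,dt$ by one integration by parts in $t$. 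The boundary contributions of that integration by parts pick up the jumps of $A(\pi,\cdot)$ and $B(\pi,\cdot)$ at $s^{\pm}(\pi),b^{\pm}(\pi)$ recorded in $(2.14)$--$(2.21)$, and these jumps carry the constants $\int_0^\pi\bigl(q+p^2/\beta^2\bigr)\,dt$ and $p(\pi)-p(0)$: that is the bounded sequence $d_n$. The leftover integrals $\int A_t(\pi,t)\sin\lambda_n t\,dt$, $\int B_t(\pi,t)\cos\lambda_n t\,dt$ are in $l_2$ because $A_t,B_t\in L_2$, and that is $k_n$. So the split into $d_n$ and $k_n$ is not ``Fourier coefficients of $p$ and $q$'' directly, but rather the boundary-versus-bulk split after integrating the kernel representation once by parts. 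Fix this mechanism and your sketch goes through.
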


\begin{proof}
	\noindent According to previous lemma, if $n$ is a sufficiently large
	natural number and $\lambda \in \Gamma _{n}$, we have $\left\vert \Delta
	_{0}\left( \lambda \right) \right\vert \geq C_{\delta }e^{\left\vert
		Im\lambda \right\vert \pi }>\frac{C_{\delta }}{2}e^{\left\vert Im\lambda
		\right\vert \pi }>\left\vert \Delta \left( \lambda \right) -\Delta
	_{0}\left( \lambda \right) \right\vert .$ Applying Rouche's theorem, we
	conclude that for sufficiently large $n$ inside the contour $\Gamma _{n}$ the
	functions $\Delta _{0}\left( \lambda \right) $ and $\Delta _{0}\left( \lambda
	\right) +\left\{ \Delta \left( \lambda \right) -\Delta _{0}\left( \lambda
	\right) \right\} =\Delta \left( \lambda \right) $ have the same number of
	zeros. That is, there are exactly $\left( n+1\right) $ zeros $\lambda
	_{1},\lambda _{2},...,\lambda _{n}$. Analogously, it is shown by Rouche's
	theorem that, for sufficiently large values of $n$, the function $\Delta
	\left( \lambda \right) $ has a unique zero inside each circle $\left\vert
	\lambda -\lambda _{n}^{0}\right\vert <\delta $ . Since $\delta >0$ is a
	arbitrary, it follows that $\lambda _{n}=\lambda _{n}^{0}+\varepsilon _{n}$
	, where $\mathop{\lim }\limits_{n\rightarrow \infty }\varepsilon _{n}=0$ .
	If $\Delta \left( \lambda _{n}\right) =0$, we have
	\begin{equation}
	\Delta _{0}\left( \lambda _{n}^{0}+\varepsilon _{n}\right) +\int_{0}^{\pi
	}A\left( \pi ,t\right) \cos \left( \lambda _{n}^{0}+\varepsilon _{n}\right)
	tdt+\int_{0}^{\pi }B\left( \pi ,t\right) \sin \left( \lambda
	_{n}^{0}+\varepsilon _{n}\right) tdt=0  \label{34}
	\end{equation}%
	\begin{equation}
	\begin{array}{l}
	{\Delta _{0}\left( \lambda _{n}^{0}+\varepsilon _{n}\right) =\left( \beta
		_{2}^{+}+\frac{\gamma _{2}}{2\beta }\right) R_{1}\left( p_{2}\right) \cos %
		\left[ \left( \lambda _{n}^{0}+\varepsilon _{n}\right) b^{+}\left( \pi
		\right) -\frac{1}{\beta }\int_{p_{2}}^{\pi }p\left( t\right) dt\right] } \\
	{\,\,\,\,\,\,\,\,\,\,\,\,\,\,\,\,\,\,\,\,\,\,\,\,\,\,\,+\left( \beta
		_{2}^{-}+\frac{\gamma _{2}}{2\beta }\right) R_{2}\left( p_{2}\right) \cos %
		\left[ \left( \lambda _{n}^{0}+\varepsilon _{n}\right) b^{-}\left( \pi
		\right) -\frac{1}{\beta }\int_{p_{2}}^{\pi }p\left( t\right) dt\right] } \\
	{\,\,\,\,\,\,\,\,\,\,\,\,\,\,\,\,\,\,\,\,\,\,\,\,\,\,\,+\left( \beta
		_{2}^{-}-\frac{\gamma _{2}}{2\beta }\right) R_{1}\left( p_{2}\right) \cos %
		\left[ \left( \lambda _{n}^{0}+\varepsilon _{n}\right) s^{+}\left( \pi
		\right) +\frac{1}{\beta }\int_{p_{2}}^{\pi }p\left( t\right) dt\right] } \\
	{\,\,\,\,\,\,\,\,\,\,\,\,\,\,\,\,\,\,\,\,\,\,\,\,\,\,\,+\left( \beta
		_{2}^{+}-\frac{\gamma _{2}}{2\beta }\right) R_{2}\left( p_{2}\right) \cos %
		\left[ \left( \lambda _{n}^{0}+\varepsilon _{n}\right) s^{-}\left( \pi
		\right) +\frac{1}{\beta }\int_{p_{2}}^{\pi }p\left( t\right) dt\right] }%
	\end{array}
	\label{35}
	\end{equation}%
	Since $\Delta _{0}\left( \lambda \right) $ is an analytical function,\\ $%
	\Delta _{0}\left( \lambda _{n}^{0}+\varepsilon _{n}\right) =\Delta
	_{0}\left( \lambda _{n}^{0}\right) \varepsilon _{n}+\mathop{\Delta }\limits%
	_{0}^{\cdot }\left( \lambda _{n}^{0}\right) \varepsilon _{n}+\frac{%
		\mathop{\Delta }\limits_{0}^{\cdot \cdot }\left( \lambda _{n}^{0}\right) }{2!%
	}\varepsilon _{n}^{2}+...\,\,,\,\mathop{\lim }\limits_{n\rightarrow \infty
	}\varepsilon _{n}=0$.
	
	$\lambda _{n}^{0}$ is the roots of the $\Delta _{0}\left( \lambda \right) =0$
	equation
	
	$\Delta _{0}\left( \lambda _{n}^{0}+\varepsilon _{n}\right) =\left[ %
	\mathop{\Delta }\limits_{0}^{.}\left( \lambda _{n}^{0}\right) +o\left(
	1\right) \right] \varepsilon _{n}\,\,,\,\,n\rightarrow \infty $ is provided.
	\[\begin{array}{l} {\left[\mathop{\Delta }\limits^{.} _{0} \left(\lambda _{n} ^{0} \right)+o\left(1\right)\right]\varepsilon _{n} \, } \\ {+\int _{p_{2} }^{s^{-} \left(x\right)-0}A\left(\pi ,t\right)\cos \left(\lambda _{n} ^{0} +\varepsilon _{n} \right)tdt +\int _{s^{-} \left(x\right)+0}^{s^{+} \left(x\right)-0}A\left(\pi ,t\right)\cos \left(\lambda _{n} ^{0} +\varepsilon _{n} \right)tdt } \\ {+\int _{s^{+} \left(x\right)+0}^{b^{-} \left(x\right)-0}A\left(\pi ,t\right)\cos \left(\lambda _{n} ^{0} +\varepsilon _{n} \right)tdt +\int _{b^{-} \left(x\right)+0}^{b^{+} \left(x\right)-0}A\left(\pi ,t\right)\cos \left(\lambda _{n} ^{0} +\varepsilon _{n} \right)tdt } \\ {+\int _{b^{+} \left(x\right)+0}^{x}A\left(\pi ,t\right)\cos \left(\lambda _{n} ^{0} +\varepsilon _{n} \right)tdt +\int _{p_{2} }^{s^{-} \left(x\right)-0}B\left(\pi ,t\right)\sin \left(\lambda _{n} ^{0} +\varepsilon _{n} \right)tdt } \\ {+\int _{s^{-} \left(x\right)+0}^{s^{+} \left(x\right)-0}B\left(\pi ,t\right)\sin \left(\lambda _{n} ^{0} +\varepsilon _{n} \right)tdt +\int _{s^{+} \left(x\right)+0}^{b^{-} \left(x\right)-0}B\left(\pi ,t\right)\sin \left(\lambda _{n} ^{0} +\varepsilon _{n} \right)tdt } \\ {+\int _{b^{-} \left(x\right)+0}^{b^{+} \left(x\right)-0}B\left(\pi ,t\right)\sin \left(\lambda _{n} ^{0} +\varepsilon _{n} \right)tdt +\int _{b^{+} \left(x\right)+0}^{x}B\left(\pi ,t\right)\sin \left(\lambda _{n} ^{0} +\varepsilon _{n} \right)tdt =0} \end{array}\]
	\ It is easy to see that the function $\Delta _{0}\left( \lambda \right) =0
	$ is type of \cite{Jdanovich}, so there is a $\eta _{\delta }>0$ such that
	$\left\vert \mathop{\Delta }\limits_{0}^{.}\left( \lambda _{n}^{0}\right)
	\right\vert \geq \eta _{\delta }>0$ is satisfied for all $n$. We also have
	\begin{equation}
	\lambda _{n}^{0}=\frac{n\pi }{\beta \pi -\beta p_{2}+\alpha p_{2}-\alpha
		p_{1}+p_{1}}+\psi _{1}\left( n\right)   \label{36}
	\end{equation}%
	where $\mathop{\sup }\limits_{n}\left\vert \psi _{1}\left( n\right)
	\right\vert <M$ is for some constant $M>0$ \cite{Krein}. Further,
	substituting $\left( 3.6\right) $ into $\left( 3.5\right) $ after certain
	transformations, we reach $\varepsilon _{n}\in l_{2}$ . We can obtain more
	precisely
	
	Since $\left( \int_{0}^{\pi }A_{t}\left( \pi ,t\right) \sin \left( \lambda
	_{n}^{0}+\varepsilon _{n}\right) tdt\right) \in l_{2}$ and $\left(
	\int_{0}^{\pi }B_{t}\left( \pi ,t\right) \cos \left( \lambda
	_{n}^{0}+\varepsilon _{n}\right) tdt\right) \in l_{2}$ , we have
	\[\begin{array}{l} {\varepsilon _{n} =\frac{1}{2\lambda _{n} ^{0} \Delta _{0} \left(\lambda _{n} ^{0} \right)} \left\{\left[\left(\beta _{2} ^{-} -\frac{\gamma _{2} }{2\beta } \right)\frac{R_{2} \left(p_{2} \right)}{2\beta } \sin \left[\lambda _{n} ^{0} s^{-} \left(\pi \right)+\frac{\omega \left(x\right)}{\beta } \right]\right. \right. } \\ {+\left(\beta _{2} ^{-} -\frac{\gamma _{2} }{2\beta } \right)\frac{R_{1} \left(p_{2} \right)}{2\beta } \sin \left[\lambda _{n} ^{0} s^{+} \left(\pi \right)+\frac{\omega \left(x\right)}{\beta } \right]+\left(\beta _{2} ^{-} -\frac{\gamma _{2} }{2\beta } \right)\frac{R_{2} \left(p_{2} \right)}{2\beta } \sin \left[\lambda _{n} ^{0} b^{-} \left(\pi \right)-\frac{\omega \left(x\right)}{\beta } \right]} \\ {\left. +\left(\beta _{2} ^{+} +\frac{\gamma _{2} }{2\beta } \right)\frac{R_{1} \left(p_{2} \right)}{2\beta } \sin \left[\lambda _{n} ^{0} b^{+} \left(\pi \right)-\frac{\omega \left(x\right)}{\beta } \right]\right]\int _{0}^{\pi }\left(q\left(t\right)+p^{2} \left(t\right)\right)dt } \\ {+\left[-\left(\beta _{2} ^{-} -\frac{\gamma _{2} }{2\beta } \right)\frac{R_{2} \left(p_{2} \right)}{2\beta ^{2} } \cos \left[\lambda _{n} ^{0} s^{-} \left(\pi \right)+\frac{\omega \left(x\right)}{\beta } \right]\right. } \\ {-\left(\beta _{2} ^{-} -\frac{\gamma _{2} }{2\beta } \right)\frac{R_{1} \left(p_{2} \right)}{2\beta ^{2} } \cos \left[\lambda _{n} ^{0} s^{+} \left(\pi \right)+\frac{\omega \left(x\right)}{\beta } \right]+\left(\beta _{2} ^{-} +\frac{\gamma _{2} }{2\beta } \right)\frac{R_{2} \left(p_{2} \right)}{2\beta ^{2} } \cos \left[\lambda _{n} ^{0} b^{-} \left(\pi \right)-\frac{\omega \left(x\right)}{\beta } \right]} \\ {\left. \left. +\left(\beta _{2} ^{+} +\frac{\gamma _{2} }{2\beta } \right)\frac{R_{1} \left(p_{2} \right)}{2\beta ^{2} } \cos \left[\lambda _{n} ^{0} b^{+} \left(\pi \right)-\frac{\omega \left(x\right)}{\beta } \right]\right]\left[p\left(\pi \right)-p\left(0\right)\right]\right\}+\frac{k_{n} }{\lambda _{n} ^{0} } \, } \end{array}\]
	where
	\[\begin{array}{l} {d_{n} =\frac{1}{2\Delta _{0} \left(\lambda _{n} ^{0} \right)} \left\{\left[\left(\beta _{2} ^{-} -\frac{\gamma _{2} }{2\beta } \right)\frac{R_{2} \left(p_{2} \right)}{2\beta } \sin \left[\lambda _{n} ^{0} s^{-} \left(\pi \right)+\frac{\omega \left(x\right)}{\beta } \right]\right. \right. } \\ {+\left(\beta _{2} ^{-} -\frac{\gamma _{2} }{2\beta } \right)\frac{R_{1} \left(p_{2} \right)}{2\beta } \sin \left[\lambda _{n} ^{0} s^{+} \left(\pi \right)+\frac{\omega \left(x\right)}{\beta } \right]+\left(\beta _{2} ^{-} -\frac{\gamma _{2} }{2\beta } \right)\frac{R_{2} \left(p_{2} \right)}{2\beta } \sin \left[\lambda _{n} ^{0} b^{-} \left(\pi \right)-\frac{\omega \left(x\right)}{\beta } \right]} \\ {\left. +\left(\beta _{2} ^{+} +\frac{\gamma _{2} }{2\beta } \right)\frac{R_{1} \left(p_{2} \right)}{2\beta } \sin \left[\lambda _{n} ^{0} b^{+} \left(\pi \right)-\frac{\omega \left(x\right)}{\beta } \right]\right]\int _{0}^{\pi }\left(q\left(t\right)+p^{2} \left(t\right)\right)dt } \\ {+\left[-\left(\beta _{2} ^{-} -\frac{\gamma _{2} }{2\beta } \right)\frac{R_{2} \left(p_{2} \right)}{2\beta ^{2} } \cos \left[\lambda _{n} ^{0} s^{-} \left(\pi \right)+\frac{\omega \left(x\right)}{\beta } \right]\right. } \\ {-\left(\beta _{2} ^{-} -\frac{\gamma _{2} }{2\beta } \right)\frac{R_{1} \left(p_{2} \right)}{2\beta ^{2} } \cos \left[\lambda _{n} ^{0} s^{+} \left(\pi \right)+\frac{\omega \left(x\right)}{\beta } \right]+\left(\beta _{2} ^{-} +\frac{\gamma _{2} }{2\beta } \right)\frac{R_{2} \left(p_{2} \right)}{2\beta ^{2} } \cos \left[\lambda _{n} ^{0} b^{-} \left(\pi \right)-\frac{\omega \left(x\right)}{\beta } \right]} \\ {\left. \left. +\left(\beta _{2} ^{+} +\frac{\gamma _{2} }{2\beta } \right)\frac{R_{1} \left(p_{2} \right)}{2\beta ^{2} } \cos \left[\lambda _{n} ^{0} b^{+} \left(\pi \right)-\frac{\omega \left(x\right)}{\beta } \right]\right]\left[p\left(\pi \right)-p\left(0\right)\right]\right\}} \end{array}\]
	is bounded sequence. The proof is completed.
\end{proof}

\noindent

	\noindent The $\varphi \left( x,\lambda \right) $ function is $\left\vert
	\lambda \right\vert \rightarrow \infty $ in the region $D=\left\{ \lambda :\arg
	\lambda \in \left[ \varepsilon ,\pi -\varepsilon \right] \right\} $

\noindent for $x>p_{2} $,
$$
\varphi \left(x,\lambda \right)=\frac{1}{2} \left(\beta _{2}^{+} +\frac{%
	\gamma _{2} }{2\beta }  \right)\exp
\left(-i\left(\lambda b^{+}
\left(x\right)-w\left(x\right)\right)\right)\left(1+O\left(\frac{1}{\lambda }
\right)\right),\left|\lambda \right|\to \infty
$$
it has an asymptotic representation where $w\left(x\right)=\int _{p_{2}
}^{x}p\left(t\right)dt $ \newline
and $\beta _{2} ^{\mp } =\frac{1}{2} \left(\alpha _{2} \mp \frac{\alpha
	\beta _{2} }{\beta } \right)$ .

\section{Inverse Problem.}

\noindent Let us consider the baundary value problem $\tilde{L}$ :
$$
\tilde{L}:=\left\{%
\begin{array}{l}
{l\left(y\right):=-y^{\prime \prime }+\left[2\lambda \tilde{p}\left(x\right)+%
	\tilde{q}\left(x\right)\right]y=\lambda ^{2} \tilde{\delta }%
	\left(x\right)y,\, \, x\in \left(0,\pi \right)\, \, \, \, \, \, \, \, \, \,
	\, \, \, \, \, \, \, \, \, \, \, } \\
{y^{\prime }\left(0\right)=0,y\left(\pi
	\right)=0\, \, \, \, \, \, \, \, \, \, \, \, \, \, \, \, \, \, \, \, \, \,
	\, \, \, \, \, \, \, \, \, \, \, \, \, \, \, \, \, \, \, \, \, \, \, \, \,
	\, \, \, \, \, \, \, \, \, \, \, \, \, \, \, \, \, \, \, \, \, } \\
{y\left(\tilde{p}_{1} +0\right)=\tilde{\alpha }_{1} y\left(\tilde{p}_{1}
	-0\right)\, \, \, \, \, \, \, \, \, \, \, \, \, \, \, \, \, \, \, \, \, \,
	\, \, \, \, \, \, \, \, \, \, \, \, \, \, \, \, \, \, \, \, \, \, \, \, \,
	\, \, \, \, \, \, \, \, \, \, \, \, \, \, \, \, \, \, \, \, \, \, \, \, \,
	\, \, \, \, \, \, \, \, \, \, \, \, \, \, \, \, \, \, \, \, \, \, \, \, \, }
\\
{y^{\prime }\left(\tilde{p}_{1} +0\right)=\tilde{\beta }_{1} y^{\prime
	}\left(\tilde{p}_{1} -0\right)+i\lambda \tilde{\gamma }_{1} y\left(\tilde{p}%
	_{1} -0\right)\, \, \, \, \, \, \, \, \, \, \, \, \, \, \, \, \, \, \, \, \,
	\, \, \, \, \, \, \, \, \, \, \, \, \, \, \, \, \, \, \, \, \, \, \, \, \,
	\, \, \, \, \, \, \, \, \, \, \, } \\
{y\left(\tilde{p}_{2} +0\right)=\tilde{\alpha }_{2} y\left(\tilde{p}_{2}
	-0\right)\, \, \, \, \, \, \, \, \, \, \, \, \, \, \, \, \, \, \, \, \, \,
	\, \, \, \, \, \, \, \, \, \, \, \, \, \, \, \, \, \, \, \, \, \, \, \, \,
	\, \, \, \, \, \, \, \, \, \, \, \, \, \, \, \, \, \, \, \, \, \, \, \, \,
	\, \, \, \, \, \, \, \, \, \, \, \, \, \, \, \, \, \, \, \, \, \, \, } \\
{y^{\prime }\left(\tilde{p}_{2} +0\right)=\tilde{\beta }_{2} y^{\prime
	}\left(\tilde{p}_{2} -0\right)+i\lambda \tilde{\gamma }_{2} y\left(\tilde{p}%
	_{2} -0\right)\, \, \, \, \, \, \, \, \, \, \, \, \, \, \, \, \, \, \, \, \,
	\, \, \, \, \, \, \, \, \, \, \, \, \, \, \, \, \, \, \, \, \, \, \, \, \,
	\, \, \, \, \, \, \, \, }%
\end{array}%
\right.
$$
Let the function $\Phi \left(x,\lambda \right)$ denote solution of $%
\left(1.1\right)$ that satisfy the conditions $\Phi '\left(0\right)=1\, ,\, \Phi \left(\pi \right)=0$ respectively and jump conditions $\left(1.3\right)-%
\left(1.6\right)$. Lets define it as $M\left(\lambda \right):=\Phi
\left(0,\lambda \right)$.

\noindent The $\Phi \left(x,\lambda \right)$ and $M\left(\lambda \right)$
functions are called the Weyl solution and the Weyl function, respectively.
\\$\Phi \left( x,\lambda \right) =M\left( \lambda \right)
	.\varphi \left( x,\lambda \right) +S\left( x,\lambda \right) \,\,,\,\,\left(
	\lambda \neq \lambda _{n}\,\,;\,\,n=1,2,3,...\right) $ is true.
Because of $\left. W\left[\varphi ,S\right]\right|_{x=0} =\varphi
\left(0,\lambda \right)S^{\prime }\left(0,\lambda \right)-\varphi ^{\prime
}\left(0,\lambda \right)S\left(0,\lambda \right)=1\ne 0$, $\varphi
\left(x,\lambda \right)$ and $S\left(x,\lambda \right)$ solutions are linear
independent. When $\psi \left(x,\lambda \right)$ is solution $\left(1.1\right)$%
,
$$
\begin{array}{l}
{\psi \left(x,\lambda \right)=A\left(\lambda \right)\varphi \left(x,\lambda
	\right)+B\left(\lambda \right)S\left(x,\lambda \right)} \\
{\psi ^{\prime }\left(x,\lambda \right)=A\left(\lambda \right)\varphi
	^{\prime }\left(x,\lambda \right)+B\left(\lambda \right)S^{\prime
	}\left(x,\lambda \right).}%
\end{array}%
$$
Due to boundary conditions, $A\left(\lambda \right)=\psi \left(0,\lambda
\right)\, \, ,\, \, B\left(\lambda \right)=\psi ^{\prime }\left(0,\lambda
\right)=-\Delta \left(\lambda \right)$ . Then $\psi \left(x,\lambda
\right)=\psi \left(0,\lambda \right)\varphi \left(x,\lambda \right)-\Delta
\left(\lambda \right)S\left(x,\lambda \right)$ is obtained. Hence,\newline
$\Phi \left(x,\lambda \right):=-\frac{\psi \left(x,\lambda \right)}{\Delta
	\left(\lambda \right)} =S\left(x,\lambda \right)+M\left(\lambda
\right)\varphi \left(x,\lambda \right)\, \, ,\, \, M\left(\lambda \right)=-%
\frac{\psi \left(0,\lambda \right)}{\Delta \left(\lambda \right)} $.\newline
The $M\left(\lambda \right)$ function is a meromorphic function.

\begin{theorem}
	\noindent\ If $M\left( \lambda \right) =\tilde{M}\left( \lambda \right) $,
	then $L=\tilde{L}$.
\end{theorem}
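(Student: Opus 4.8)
The plan is to run the classical Weyl-function uniqueness argument (in the spirit of Yurko \cite{Yurko}) adapted to the two interior discontinuities. Alongside $\varphi,\psi,S,\Phi$ for $L$, introduce the corresponding functions $\tilde\varphi,\tilde\psi,\tilde S,\tilde\Phi$ for $\tilde L$, and form the "transfer" functions
$$
P_{11}(x,\lambda)=\varphi(x,\lambda)\tilde\Phi'(x,\lambda)-\Phi(x,\lambda)\tilde\varphi'(x,\lambda),\qquad
P_{12}(x,\lambda)=\Phi(x,\lambda)\tilde\varphi(x,\lambda)-\varphi(x,\lambda)\tilde\Phi(x,\lambda).
$$
Since $\tilde\Phi=\tilde S+\tilde M\tilde\varphi$ and $W[\tilde\varphi,\tilde S]\equiv 1$, one has $W[\tilde\varphi,\tilde\Phi]\equiv1$, so these are exactly the entries of the first row of the matrix $P=\bigl[\begin{smallmatrix}\varphi&\Phi\\ \varphi'&\Phi'\end{smallmatrix}\bigr]\bigl[\begin{smallmatrix}\tilde\varphi&\tilde\Phi\\ \tilde\varphi'&\tilde\Phi'\end{smallmatrix}\bigr]^{-1}$, and consequently
$$
\varphi(x,\lambda)=P_{11}(x,\lambda)\tilde\varphi(x,\lambda)+P_{12}(x,\lambda)\tilde\varphi'(x,\lambda),\qquad
\Phi(x,\lambda)=P_{11}(x,\lambda)\tilde\Phi(x,\lambda)+P_{12}(x,\lambda)\tilde\Phi'(x,\lambda).
$$

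The key observation is that the hypothesis $M\equiv\tilde M$ removes all poles. Substituting $\Phi=S+M\varphi$ and $\tilde\Phi=\tilde S+M\tilde\varphi$ into the definitions gives, after cancellation,
$$
P_{11}(x,\lambda)=\varphi(x,\lambda)\tilde S'(x,\lambda)-S(x,\lambda)\tilde\varphi'(x,\lambda),\qquad
P_{12}(x,\lambda)=S(x,\lambda)\tilde\varphi(x,\lambda)-\varphi(x,\lambda)\tilde S(x,\lambda),
$$
and the right-hand sides involve only $\varphi,S,\tilde\varphi,\tilde S$, which are entire in $\lambda$ for each fixed $x$. Hence $P_{11}(x,\cdot)$ and $P_{12}(x,\cdot)$ are entire.

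Next I would invoke the representations of Theorems 1 and 2 and the asymptotic estimates of Section 3 to bound $\varphi,S$ (hence $\tilde\varphi,\tilde S$) on the sectors used there; combined with $|\Delta(\lambda)|\ge C_\delta e^{|\operatorname{Im}\lambda|(\beta\pi-\beta p_2+\alpha p_2-\alpha p_1+p_1)}$ for $\lambda\in\bar G_\delta$, this yields $P_{11}(x,\lambda)\to1$ and $P_{12}(x,\lambda)\to0$ as $|\lambda|\to\infty$, uniformly in $x\in[0,\pi]$. Liouville's theorem then forces $P_{11}(x,\lambda)\equiv1$ and $P_{12}(x,\lambda)\equiv0$, whence $\varphi(x,\lambda)\equiv\tilde\varphi(x,\lambda)$ and $\Phi(x,\lambda)\equiv\tilde\Phi(x,\lambda)$ for all $x,\lambda$. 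Finally, substituting $\varphi\equiv\tilde\varphi$ into $(1.1)$ for $L$ and for $\tilde L$ and subtracting gives $\bigl[2\lambda(p-\tilde p)+(q-\tilde q)\bigr]\varphi=\lambda^2(\delta-\tilde\delta)\varphi$ identically in $\lambda$; comparing powers of $\lambda$ yields $\delta\equiv\tilde\delta$ (so $\alpha=\tilde\alpha$, $\beta=\tilde\beta$, and, from the locations of the discontinuities of $\varphi$, $p_1=\tilde p_1$, $p_2=\tilde p_2$), $p\equiv\tilde p$ and $q\equiv\tilde q$, while imposing the jump relations $(1.3)$–$(1.6)$ on the common function $\varphi$ at $p_1,p_2$ forces $\alpha_i=\tilde\alpha_i,\ \beta_i=\tilde\beta_i,\ \gamma_i=\tilde\gamma_i$. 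Therefore $L=\tilde L$.

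The main obstacle I anticipate is the bookkeeping around the two discontinuities: one must establish the uniform asymptotics of $P_{11},P_{12}$ across $p_1$ and $p_2$, and in particular argue that $M\equiv\tilde M$ already pins down the geometric data $p_1,p_2,\alpha,\beta$ and the jump parameters $\alpha_i,\beta_i,\gamma_i$ before the Liouville step can even be phrased for $\tilde L$ — this is done by reading these quantities off the exponential type and the leading terms of $\psi(0,\lambda)$ and $\Delta(\lambda)$ obtained in Section 3, so that $\varphi$ and $\tilde\varphi$ live on the same interval with the same transmission structure.
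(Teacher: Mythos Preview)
Your proposal is correct and follows essentially the same route as the paper: define the transformation matrix $P$ from $(\tilde\varphi,\tilde\Phi)$ to $(\varphi,\Phi)$, use $M\equiv\tilde M$ to rewrite $P_{11},P_{12}$ in terms of the entire functions $\varphi,S,\tilde\varphi,\tilde S$, apply Liouville, and then read off equality of the coefficients and jump data from $\varphi\equiv\tilde\varphi$. The only cosmetic difference is that the paper first concludes $P_{11}(x,\lambda)\equiv A(x)$ from boundedness and then obtains $A^2(x)=1$ via $W[\varphi,\Phi]=W[\tilde\varphi,\tilde\Phi]=1$, whereas you go straight to $P_{11}\equiv1$ from the sharper asymptotic $P_{11}\to1$; both are standard variants of the same Yurko-type argument.
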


\begin{proof}
	\noindent Let us define the matrix $P\left( x,\lambda \right) =\left[
	P_{j,k}\left( x,\lambda \right) \right] ,\left( j,k=1,2\right) $ by the
	formula
	
	$P\left( x,\lambda \right) \cdot \left(
	\begin{array}{cc}
	{\widetilde{\varphi }\left( x,\lambda \right) } & {\widetilde{\Phi }\left(
		x,\lambda \right) } \\
	{\widetilde{\varphi }^{{^{\prime }}}\left( x,\lambda \right) } & {\widetilde{%
			\Phi }^{{^{\prime }}}\left( x,\lambda \right) }%
	\end{array}%
	\right) =\left(
	\begin{array}{cc}
	{\varphi \left( x,\lambda \right) } & {\Phi \left( x,\lambda \right) } \\
	{\varphi ^{{^{\prime }}}\left( x,\lambda \right) } & {\Phi ^{{^{\prime }}%
		}\left( x,\lambda \right) }%
	\end{array}%
	\right) $ .\\ In this case
	
	$$
	\begin{array}{l}
	{P_{11}\left( x,\lambda \right) =-\varphi \left( x,\lambda \right) \frac{%
			\widetilde{\psi }^{{^{\prime }}}\left( x,\lambda \right) }{\widetilde{\Delta
			}\left( \lambda \right) }+\widetilde{\varphi }^{{^{\prime }}}\left(
		x,\lambda \right) \frac{\psi \left( x,\lambda \right) }{\Delta \left(
			\lambda \right) }} \\
	{P_{12}\left( x,\lambda \right) =-\widetilde{\varphi }\left( x,\lambda
		\right) \frac{\psi \left( x,\lambda \right) }{\Delta \left( \lambda \right) }%
		+\varphi \left( x,\lambda \right) \frac{\widetilde{\psi }\left( x,\lambda
			\right) }{\widetilde{\Delta }\left( \lambda \right) }} \\
	{P_{21}\left( x,\lambda \right) =-\varphi ^{\prime }\left( x,\lambda \right)
		\frac{\widetilde{\psi }^{{^{\prime }}}\left( x,\lambda \right) }{\widetilde{%
				\Delta }\left( \lambda \right) }-\widetilde{\varphi }^{{^{\prime }}}\left(
		x,\lambda \right) \frac{\psi ^{\prime }\left( x,\lambda \right) }{\Delta
			\left( \lambda \right) }} \\
	{P_{22}\left( x,\lambda \right) =-\widetilde{\varphi }\left( x,\lambda
		\right) \frac{\psi ^{\prime }\left( x,\lambda \right) }{\Delta \left(
			\lambda \right) }+\varphi ^{\prime }\left( x,\lambda \right) \frac{%
			\widetilde{\psi }\left( x,\lambda \right) }{\widetilde{\Delta }\left(
			\lambda \right) }}%
	\end{array}%
	$$
	Hence,
	$$
	\begin{array}{l}
	{P_{11}\left( x,\lambda \right) =\varphi \left( x,\lambda \right) \left[
		\widetilde{S}^{{^{\prime }}}\left( x,\lambda \right) +\widetilde{M}\left(
		\lambda \right) \cdot \widetilde{\varphi }^{{^{\prime }}}\left( x,\lambda
		\right) \right] -\widetilde{\varphi }^{{^{\prime }}}\left( x,\lambda \right) %
		\left[ S\left( x,\lambda \right) +M\left( \lambda \right) \cdot \varphi
		\left( x,\lambda \right) \right] } \\
	{=\varphi \left( x,\lambda \right) \widetilde{S}^{{^{\prime }}}\left(
		x,\lambda \right) -\widetilde{\varphi }^{{^{\prime }}}\left( x,\lambda
		\right) S\left( x,\lambda \right) +\left[ \widetilde{M}\left( \lambda
		\right) -M\left( \lambda \right) \right] \varphi \left( x,\lambda \right)
		\widetilde{\varphi }^{{^{\prime }}}\left( x,\lambda \right) } \\
	\\
	{P_{12}\left( x,\lambda \right) =\widetilde{\varphi }\left( x,\lambda
		\right) \left[ S\left( x,\lambda \right) +M\left( \lambda \right) \cdot
		\varphi \left( x,\lambda \right) \right] -\varphi \left( x,\lambda \right) %
		\left[ \widetilde{S}\left( x,\lambda \right) +\widetilde{M}\left( \lambda
		\right) \cdot \widetilde{\varphi }\left( x,\lambda \right) \right] } \\
	{=\widetilde{\varphi }\left( x,\lambda \right) S\left( x,\lambda \right)
		-\varphi \left( x,\lambda \right) \widetilde{S}\left( x,\lambda \right) +%
		\left[ M\left( \lambda \right) -\widetilde{M}\left( \lambda \right) \right]
		\varphi \left( x,\lambda \right) \widetilde{\varphi }\left( x,\lambda
		\right) } \\
	\\
	{P_{21}\left( x,\lambda \right) =\varphi ^{\prime }\left( x,\lambda \right) %
		\left[ \widetilde{S}^{{^{\prime }}}\left( x,\lambda \right) +\widetilde{M}%
		\left( \lambda \right) \cdot \widetilde{\varphi }^{{^{\prime }}}\left(
		x,\lambda \right) \right] -\widetilde{\varphi }^{{^{\prime }}}\left(
		x,\lambda \right) \left[ S^{\prime }\left( x,\lambda \right) +M\left(
		\lambda \right) \cdot \varphi ^{\prime }\left( x,\lambda \right) \right] }
	\\
	{=\varphi ^{\prime }\left( x,\lambda \right) \widetilde{S}^{{^{\prime }}%
		}\left( x,\lambda \right) -\widetilde{\varphi }^{{^{\prime }}}\left(
		x,\lambda \right) S^{\prime }\left( x,\lambda \right) +\left[ \widetilde{M}%
		\left( \lambda \right) -M\left( \lambda \right) \right] \varphi ^{\prime
		}\left( x,\lambda \right) \widetilde{\varphi }^{{^{\prime }}}\left(
		x,\lambda \right) } \\
	\\
	{P_{22}\left( x,\lambda \right) =\widetilde{\varphi }\left( x,\lambda
		\right) \left[ S^{\prime }\left( x,\lambda \right) +M\left( \lambda \right)
		\cdot \varphi ^{\prime }\left( x,\lambda \right) \right] +\varphi ^{\prime
		}\left( x,\lambda \right) \left[ \widetilde{S}\left( x,\lambda \right) +%
		\widetilde{M}\left( \lambda \right) \cdot \widetilde{\varphi }\left(
		x,\lambda \right) \right] } \\
	{=\varphi ^{\prime }\left( x,\lambda \right) S^{{^{\prime }}}\left(
		x,\lambda \right) -\varphi ^{{^{\prime }}}\left( x,\lambda \right)
		\widetilde{S}\left( x,\lambda \right) +\left[ M\left( \lambda \right) -%
		\widetilde{M}\left( \lambda \right) \right] \varphi ^{\prime }\left(
		x,\lambda \right) \widetilde{\varphi }\left( x,\lambda \right) }%
	\end{array}%
	$$
	from $M\left( \lambda \right) \equiv \widetilde{M}\left( \lambda \right) $;
	$$
	\begin{array}{l}
	{P_{11}\left( x,\lambda \right) =\varphi \left( x,\lambda \right) \widetilde{%
			S}^{{^{\prime }}}\left( x,\lambda \right) -\widetilde{\varphi }^{{^{\prime }}%
		}\left( x,\lambda \right) S\left( x,\lambda \right) } \\
	\\
	{P_{12}\left( x,\lambda \right) =\widetilde{\varphi }\left( x,\lambda
		\right) S\left( x,\lambda \right) -\varphi \left( x,\lambda \right)
		\widetilde{S}\left( x,\lambda \right) } \\
	\\
	{P_{21}\left( x,\lambda \right) =\varphi ^{\prime }\left( x,\lambda \right)
		\widetilde{S}^{{^{\prime }}}\left( x,\lambda \right) -\widetilde{\varphi }^{{%
				^{\prime }}}\left( x,\lambda \right) S^{\prime }\left( x,\lambda \right) }
	\\
	\\
	{P_{22}\left( x,\lambda \right) =\varphi ^{\prime }\left( x,\lambda \right)
		S^{{^{\prime }}}\left( x,\lambda \right) -\varphi ^{{^{\prime }}}\left(
		x,\lambda \right) \widetilde{S}\left( x,\lambda \right) }%
	\end{array}%
	$$
	are obtained. When $M\left( \lambda \right) \equiv \widetilde{M}\left( \lambda
	\right) $, it is clear that the $P_{j,k}\left( x,\lambda \right) ,\left(
	j,k=1,2\right) $ functions are full functions according to $\lambda $. From $%
	\left( 3.3\right) $; for $\forall x\in \left[ 0,\pi \right] $ , $c_{\delta }$
	, $C_{\delta }$ constants that provide $\left\vert P_{11}\left( x,\lambda
	\right) \right\vert \leq c_{\delta }$ and $\left\vert P_{12}\left( x,\lambda
	\right) \right\vert \leq C_{\delta }$ inequalities can be shown. From the
	Liouville theorem $P_{11}\left( x,\lambda \right) \equiv A\left( x\right) $
	and $P_{12}\left( x,\lambda \right) \equiv 0$. From
	$$
	\begin{array}{l}
	{\varphi \left( x,\lambda \right) \cdot \Phi ^{\prime }\left( x,\lambda
		\right) -\widetilde{\varphi }^{{^{\prime }}}\left( x,\lambda \right) \cdot
		\Phi \left( x,\lambda \right) =A\left( x\right) } \\
	{\widetilde{\varphi }\left( x,\lambda \right) \cdot \Phi \left( x,\lambda
		\right) -\varphi \left( x,\lambda \right) \cdot \widetilde{\Phi }\left(
		x,\lambda \right) =0}%
	\end{array}%
	$$
	\begin{equation}
	\left.
	\begin{array}{l}
	{\varphi \left( x,\lambda \right) =\widetilde{\varphi }\left( x,\lambda
		\right) \cdot A\left( x\right) } \\
	{\Phi \left( x,\lambda \right) =\widetilde{\Phi }\left( x,\lambda \right)
		\cdot A\left( x\right) }%
	\end{array}%
	\right\}   \label{GrindEQ__34_}
	\end{equation}
	
	are obtained and
	$$
	\begin{array}{l}
	{W\left[ \varphi ,\Phi \right] =W\left[ \varphi \left( x,\lambda \right) ,-%
		\frac{\psi \left( x,\lambda \right) }{\Delta \left( \lambda \right) }\right]
	} \\
	{=\frac{1}{\Delta \left( \lambda \right) }W\left[ \varphi \left( x,\lambda
		\right) ,-\psi \left( 0,\lambda \right) \varphi \left( x,\lambda \right)
		+\Delta \left( \lambda \right) S\left( x,\lambda \right) \right] } \\
	{=-\frac{\psi \left( 0,\lambda \right) }{\Delta \left( \lambda \right) }W%
		\left[ \varphi \left( x,\lambda \right) ,\varphi \left( x,\lambda \right) %
		\right] +W\left[ \varphi \left( x,\lambda \right) ,S\left( x,\lambda \right) %
		\right] =1}%
	\end{array}%
	$$
	And similarly  $W\left[\widetilde{\varphi },\widetilde{\Phi }\right]=1$ is obtained. If this equation is written in place of $\left(4.1\right)$,
	
	\[\begin{array}{l} {1=W\left[\varphi \left(x,\lambda \right),\Phi \left(x,\lambda \right)\right]=W\left[A\left(x\right)\widetilde{\varphi }\left(x,\lambda \right),A\left(x\right)\widetilde{\Phi }\left(x,\lambda \right)\right]} \\ {\, \, \, \, \, \, \, \, \, \, \, \, \, \, \, \, \, \, \, \, \, \, \, \, \, \, \, \, \, \, \, \, \, \, \, \, \, \, \, \, \, \, \, \, \, \, \, \, \, \, \, \, \, \, \, \, \, \, \, =A^{2} \left(x\right)W\left[\widetilde{\varphi }\left(x,\lambda \right),\widetilde{\Phi }\left(x,\lambda \right)\right]=A^{2} \left(x\right)} \end{array}\]
	
	is obtained.
	
	\noindent Therefore, $\left(\beta _{2}^{+} +\frac{\gamma _{2} }{2\beta }  \right)\ne 1$; $p_{1} =\tilde{p}_{1} \, \, ,\, \, p_{2} =\tilde{p}_{2} $ . We have $A\left(x\right)=1$ from  $\left(4.1\right)$
	
	\noindent $\varphi \left(x,\lambda \right)\equiv \tilde{\varphi }\left(x,\lambda \right)$ and $\Phi \left(x,\lambda \right)\equiv \tilde{\Phi }\left(x,\lambda \right)$.
	
	\noindent When $\varphi \left(x,\lambda \right)\equiv \tilde{\varphi }\left(x,\lambda \right)$,
	\[\begin{array}{l} {-\varphi ''+\left[2\lambda p\left(x\right)+q\left(x\right)\right]\varphi =\lambda ^{2} \delta \left(x\right)\varphi } \\ {-\varphi ''+\left[2\lambda p\left(x\right)+q\left(x\right)\right]\varphi =\lambda ^{2} \tilde{\delta }\left(x\right)\varphi } \end{array}\]
	
	are obtained.
	\[\left\{\lambda ^{2} \left(\delta \left(x\right)-\tilde{\delta }\left(x\right)\right)+2\lambda \left(p\left(x\right)-\tilde{p}\left(x\right)\right)+\left(q\left(x\right)-\tilde{q}\left(x\right)\right)\right\}\varphi \equiv 0 \left(for\, \, \forall \lambda \, \right)\]
	$\delta \left(x\right)=\tilde{\delta }\left(x\right)$, $p\left(x\right)=\tilde{p}\left(x\right)$ and $q\left(x\right)=\tilde{q}\left(x\right)$ a.e. For every $\lambda $ in discontinuity conditions,
	$$
	\begin{array}{l}
	{\left( \alpha _{1}-\tilde{\alpha}_{1}\right) \varphi \left( p_{1}-0,\lambda
		\right) =0} \\
	{\left( \beta _{1}-\tilde{\beta}_{1}\right) \varphi ^{\prime }\left(
		p_{1}-0,\lambda \right) +\left( \gamma _{1}-\tilde{\gamma}_{1}\right)
		\varphi \left( p_{1}-0,\lambda \right) =0}%
	\end{array}%
	$$
	$$
	\begin{array}{l}
	{\left( \alpha _{2}-\tilde{\alpha}_{2}\right) \varphi \left( p_{2}-0,\lambda
		\right) =0} \\
	{\left( \beta _{2}-\tilde{\beta}_{2}\right) \varphi ^{\prime }\left(
		p_{2}-0,\lambda \right) +\left( \gamma _{2}-\tilde{\gamma}_{2}\right)
		\varphi \left( p_{2}-0,\lambda \right) =0}%
	\end{array}%
	$$
	$\alpha _{1}=\tilde{\alpha}_{1},\,\,\beta _{1}=\tilde{\beta}_{1},\,\,\gamma
	_{1}=\tilde{\gamma}_{1}$ ve $\alpha _{2}=\tilde{\alpha}_{2},\,\,\beta _{2}=%
	\tilde{\beta}_{2},\,\,\gamma _{2}=\tilde{\gamma}_{2}$.\\ Consequently $L=%
	\tilde{L}$. The proof is completed.
\end{proof}

\section*{Acknowledgement} 

Not applicable.
\bibliography{mmnsample}

\begin{thebibliography}{10}
\providecommand{\url}[1]{#1}
\csname url@samestyle\endcsname
\providecommand{\newblock}{\relax}
\providecommand{\bibinfo}[2]{#2}
\providecommand{\BIBentrySTDinterwordspacing}{\spaceskip=0pt\relax}
\providecommand{\BIBentryALTinterwordstretchfactor}{4}
\providecommand{\BIBentryALTinterwordspacing}{\spaceskip=\fontdimen2\font plus
\BIBentryALTinterwordstretchfactor\fontdimen3\font minus
  \fontdimen4\font\relax}
\providecommand{\BIBforeignlanguage}[2]{{%
\expandafter\ifx\csname l@#1\endcsname\relax
\typeout{** WARNING: IEEEtranS.bst: No hyphenation pattern has been}%
\typeout{** loaded for the language `#1'. Using the pattern for}%
\typeout{** the default language instead.}%
\else
\language=\csname l@#1\endcsname
\fi
#2}}
\providecommand{\BIBdecl}{\relax}
\BIBdecl

\bibitem{2}
V.~Ambarzumian, ``Uber eine frage der eigenwerttheorie,'' \emph{Zeitschrift fur
  Physik}, vol.~53, no. 9-10, pp. 690--695, sep 1929, doi:
  \href{http://dx.doi.org/10.1007/bf01330827}{10.1007/bf01330827}.

\bibitem{Amirov}
R.~K. Amirov and A.~A. Nabiev, ``{Inverse Problems for the Quadratic Pencil of
  the Sturm-Liouville Equations with Impulse},'' \emph{Abstract and Applied
  Analysis}, vol. 2013, pp. 1--10, 2013, doi:
  \href{http://dx.doi.org/10.1155/2013/361989}{10.1155/2013/361989}.

\bibitem{Anderson}
L.~Anderson, ``Inverse eigenvalue problems with discontiuous coefficients,''
  \emph{Inverse Problems}, vol.~4, pp. 353--397, 1998.

\bibitem{Borg}
G.~Borg, ``Eine umkehrung der sturm-liouvilleschen eigenwertaufgabe,''
  \emph{Acta Mathematica}, vol.~78, pp. 1--96, 1946.

\bibitem{Carlson}
R.~Carlson, ``{An inverse spectral problem for Sturm-Liouville operators with
  discontinuous coefficients},'' \emph{Proceedings of the American Mathematical
  Society}, vol. 120, no.~2, pp. 475--475, feb 1994, doi:
  \href{http://dx.doi.org/10.1090/s0002-9939-1994-1197532-5}{10.1090/s0002-9939-1994-1197532-5}.

\bibitem{Ergun-3}
A.~Erg\"un, ``A half-inverse problem for the singular diffusion operator with
  jump conditions,'' \emph{Miskolc Mathematical Notes}, to appear.

\bibitem{Ergun-1}
A.~Erg\"un and R.~K. Amirov, ``Direct and inverse problem for diffusion
  operator with discontinuoity points,'' \emph{TWMS J. App. Eng. Math.},
  vol.~9, pp. 9--21, 2019.

\bibitem{Ergun-2}
A.~Erg\"un and R.~K. Amirov, ``Properties of the spectrum for the singular
  diffusion operator,'' \emph{Asian Journal of Mathematics and Computer
  Research}, vol.~26, pp. 74--87, 2019.

\bibitem{4}
G.~Freiling and V.~Yurko, ``Inverse spectral problems for singular
  non-selfadjoint differential operators with discontinuities in an interior
  point,'' \emph{Inverse Problems}, vol.~18, no.~3, pp. 757--773, apr 2002,
  doi:
  \href{http://dx.doi.org/10.1088/0266-5611/18/3/316}{10.1088/0266-5611/18/3/316}.

\bibitem{Gala}
S.~Gala, Q.~Liu, and M.~Ragusa, ``A new regularity criterion for the nematic
  liquid crystal flows,'' \emph{Applicable Analysis}, vol.~91, no.~9, pp.
  1741--1747, 2012.

\bibitem{Gala-1}
S.~Gala and M.~Ragusa, ``Logarithmically improved regularity criterion for the
  boussinesq equations in besov spaces with negative indices,''
  \emph{Applicable Analysis}, vol.~95, no.~6, pp. 1271--1279, 2016.

\bibitem{Gasymov}
M.~G. Gasymov and G.~S. Guseinov, ``Determination of a diffusion operator from
  spectral data,'' \emph{Akad. Nauk Azerbaidzhanskoi SSR}, vol.~37, pp. 19--23,
  1981.

\bibitem{Guseinov-1}
G.~S. Guseinov, ``{On the spectral analysis of a quadratic pencil of
  Sturm-Liouville operators},'' \emph{Dodlady Akad. Nauk SSSR}, vol. 285, pp.
  1292--1296, 1985.

\bibitem{Guseinov-2}
G.~S. Guseinov, ``{Inverse spectral problems for a quadratic pencil of
  Sturm-Liouville operators on a finite interval},'' \emph{Spectral Theory of
  Operators and Its Applications}, pp. 51--101, 1986.

\bibitem{Hald}
O.~H. Hald, ``Discontinuous inverse eigenvalue problems,'' \emph{Comm. on Pure
  and Appl. Math}, vol.~37, pp. 539--577, 1984.

\bibitem{Jdanovich}
B.~F. Jdanovich, ``Formulae for the zeros of drichlet polynomials and
  quasi-polynomials,'' \emph{Doklady Akad. Nauk SSSR}, vol. 135, pp.
  1046--1049, 1960.

\bibitem{koyunbakan}
H.~Koyunbakan and E.~S. Panakhov, ``Half-inverse problem for diffusion
  operators on the finite interval,'' \emph{Journal of Mathematical Analysis
  and Applications}, vol. 326, no.~2, pp. 1024--1030, feb 2007, doi:
  \href{http://dx.doi.org/10.1016/j.jmaa.2006.03.068}{10.1016/j.jmaa.2006.03.068}.

\bibitem{Krein}
M.~Krein and B.~Y. Levin, ``On entire almost periodic functions of exponential
  type,'' \emph{Doklady Akad. Nauk SSSR.}, vol.~64, pp. 285--287, 1949.

\bibitem{Levin}
B.~I. Levin, \emph{Entire Functions}.\hskip 1em plus 0.5em minus 0.4em\relax
  MGV, 1971.

\bibitem{Levitan-1}
B.~M. Levitan and M.~G. Gasymov, ``Determination of a differantial equation by
  two spectra,'' \emph{Uspekhi Mathematicheskikh Nauk.}, vol.~19, pp. 3--63,
  1964.

\bibitem{Levitan-2}
B.~M. Levitan and I.~S. Sargsyan, \emph{Introduction to Spectral Theory}.\hskip
  1em plus 0.5em minus 0.4em\relax Amer. Math. Soc., 1975.

\bibitem{levitan}
B.~Levitan, \emph{Inverse Sturm-Liouville problems}.\hskip 1em plus 0.5em minus
  0.4em\relax Netherlands: VNU Science Press., 1987.

\bibitem{Marchenko}
V.~A. Marchenko, \emph{Sturm-Liouville Operators and Applications}.\hskip 1em
  plus 0.5em minus 0.4em\relax AMS Chelsea Publishing, 1986.

\bibitem{Maris}
E.~A. Maris and S.~Goktas, ``A study of the uniform convergence of spectral
  expansions for the continuous functions on a sturm-liouville problem,''
  \emph{Miskolc Mathematical Notes}, vol.~20, no.~2, pp. 1063--1086, 2019.

\bibitem{Nabiev}
M.~I. Nabiev, ``The inverse spectral problem for the diffusion operator on an
  interval,'' \emph{Mathematicheskaya Fizika, Analiz, Geometriya}, vol.~11, pp.
  302--313, 2004.

\bibitem{ragusa}
S.~Polidoro and M.~A. Ragusa, ``Harnack inequality for hypoelliptic
  ultraparabolic equations with a singular lower order term,'' \emph{Revista
  Matematica Iberoamericana}, vol.~24, no.~3, pp. 1011--1046, 2008.

\bibitem{Yang-1}
C.-F. Yang, ``Reconstruction of the diffusion operator from nodal data,''
  \emph{Zeitschrift für Naturforschung A}, vol.~65, no. 1-2, pp. 100--106, jan
  2010, doi:
  \href{http://dx.doi.org/10.1515/zna-2010-1-211}{10.1515/zna-2010-1-211}.

\bibitem{Yurko}
\BIBentryALTinterwordspacing
V.~A. Yurko, \emph{Inverse Spectral Problems for Linear Differential Operators
  and Their Applications}.\hskip 1em plus 0.5em minus 0.4em\relax CRC Press,
  2000. [Online]. Available:
  \url{https://www.ebook.de/de/product/4351615/v_a_yurko_inverse_spectral_problems_for_linear_differential_operators_and_their_applications.html}
\BIBentrySTDinterwordspacing

\bibitem{Yurko-1}
V.~A. Yurko, \emph{Introduction to the Theory of Inverse Spectral
  Problems}.\hskip 1em plus 0.5em minus 0.4em\relax Fizmatlit. Moscow, 2007.

\end{thebibliography}
\bibliographystyle{mmn}

\end{document}